\documentclass{amsart}
\usepackage{xcolor}

\usepackage{amsmath,amssymb,amsthm,amsfonts,epsfig,color}

\usepackage{verbatim}
\usepackage{tikz-cd}

\theoremstyle{definition}
\newtheorem{thm}{Theorem}[section]
\newtheorem{prop}[thm]{Proposition}
\newtheorem{cor}[thm]{Corollary}
\newtheorem{lem}[thm]{Lemma}
\newtheorem{ex}[thm]{Example}

\newtheorem{rem}[thm]{Remark}

\newcommand{\h}{\mathcal{\mathcal H}}
\newcommand{\hsp}{\mathcal{H}}
\newcommand{\oL }{\mathcal{L}(E) }

\def\ni{\noindent}
\def\LL{{\mathcal L}}
\def\nn{\nonumber}
\def\ben{\begin{eqnarray}}
\def\eeqn{\end{eqnarray}}

\title[Matrix-valued truncated Toeplitz operators]{Matrix-valued truncated Toeplitz operators: Properties and Applications}
\author{Rewayat Khan}
\author{Flavia Colonna}

\address{Deaprtment of Artificial Intelligence and Data Science ,Istanbul Aydin University, Turkey}
\address{Department of Mathematical Sciences, George Mason University, Fairfax, VA, USA}
\email{rewayat.khan@gmail.com}
\email{fcolonna@gmu.edu}

\keywords{Hardy spaces, model spaces, matrix-valued truncated Toeplitz operators.}
\subjclass[2020]{Primary 47B35, Secondary 47B32}%
\begin{document}
%
\maketitle
%

\begin{abstract}
In this paper, we present an application of matrix-valued truncated
Toeplitz operators to the matrix version of the Nevanlinna–Pick interpolation problem. We then investigate the unitary part of a matrix-valued truncated Toeplitz operator induced by a contractive matrix-valued symbol. 
We further prove that, for a pure matrix-valued inner function, the corresponding model space is invariant under multiplication operator if and only if the symbol is constant.
 In addition, we give an algebraic proof of a characterization of  the invertibility of matrix-valued asymmetric truncated Toeplitz operators using Toeplitz operators with $2\times 2$ block matrix symbol. We also discuss the solvability of matrix-valued asymmetric truncated Toeplitz operators. 
\end{abstract}
\medskip

\section{\bf{Introduction}}\label{intro}
 Let $\mathbb{C}$ be the complex plane and $\mathbb{T}$ the unit circle in $\mathbb{C}$. Let $L^{2}(\mathbb{T})$ be the space of all measurable functions on the unit circle $\mathbb{T}$ whose Fourier coefficients are square summable.
Let $H^2$ be the classical Hardy space on the unit disk 
$$\mathbb{D}=\{z\in\mathbb{C}:|z|<1\}.$$ The space $H^{2}$ can be thought of as a closed subspace of $L^{2}(\mathbb{T})$, with respect to the normalized Lebesgue measure $m$ on $\mathbb{T}$, consisting of functions whose negative Fourier coefficients vanish. 
 The space of essentially bounded functions in $L^2({\mathbb{T}})$ is denoted by $L^{\infty}$, while the space of bounded analytic functions in $H^2$ is denoted by $H^{\infty}$.

An inner function is a bounded analytic function $\theta\in H^{2}$ such that $|\theta(z)|=1$ a.e. on $\mathbb{T}$. A finite Blaschke product is an inner function of the form
$$b(z)=\prod_{j=1}^{n}\frac{\lambda_{j}-z}{1-\overline{\lambda}_{j}z}$$
where $\lambda_{j}\in \mathbb{D}$.

Truncated Toeplitz operators (TTO's) are compressions of multiplication operators to the backward shift invariant subspaces of $H^2$. Each of these subspaces is of the form
$$K_{\theta}=(\theta H^{2})^{\perp}=H^2\ominus \theta H^2,$$ where $\theta$ is an inner function.  In  \cite{sar}, Sarason gave origin to a systematic study of TTO's. Thanks to his work, a whole and rich direction of research developed. A model space $K_{\theta}$ is finite dimensional if and only if $\theta$ is a finite Blaschke product \cite[Proposition 5.7.6]{GMR}. 


Let $\theta_{1}$ and $\theta_{2}$ be two non-constant inner functions and let $K_{\theta_{1}}$ and $K_{\theta_{2}}$ be the respective corresponding model spaces. Let $\varphi\in L^{2}(\mathbb T)$ and denote by $P_{\theta_{2}}$ 
 the orthogonal projection onto $K_{\theta_{2}}$. An asymmetric truncated Toeplitz operator is the densely defined linear operator $A_\varphi^{\theta_1,\theta_2}$ 
 given by
\begin{equation}\label{11}
A_{\varphi}^{\theta_1, \theta_2}f=P_{\theta_2}(\varphi f),\quad f\in K_{\theta_{1}}\cap H^{\infty}.
\end{equation}
Let $\mathcal{T}(\theta_{1},\theta_{2})$ denote the space of all asymmetric truncated Toeplitz operators which can be extended as bounded operators from $K_{\theta_1}$ into $K_{\theta_2}$ (see \cite{part3, jl}).

\medskip

 For a complex separable Hilbert space $E$, let  $\mathcal{L}(E)$ denote the algebra of all bounded linear operators on $E$. We denote the norm and inner product in $E$ by $\|.\|_{E}$ and $\langle \cdot, \cdot \rangle_{E}$, respectively. Let $L^{2}(E)$ represent the Hilbert space of $E$-valued measurable functions that are square integrable on $\mathbb{T}$, that is, the collection of elements $f:\mathbb{T}\to E$ of the form
\begin{equation}\label{one1}
 f(z)=\sum\limits_{n=-\infty}^{\infty}a_{n}z^n\end{equation} 
 where $\{a_{n}\}\subset E$ and $\sum\limits_{n=-\infty}^{\infty}\|a_{n}\|_{E}^{2}<\infty.$
It can be readily verified that $L^{2}(E)$ is a separable Hilbert space with inner product given by
\begin{equation*}
\langle f,g\rangle_{L^{2}(E)}=\int_{\mathbb{T}}\langle f(z),g(z)\rangle_{E}\,dm(z),\quad f,g\in L^2(E).
\end{equation*}
If $f\in L^2(E)$ is as in 
 \eqref{one1}, then its Fourier series converges in the $L^2(E)$-norm and
$$\|f\|_{L^2(E)}^2=\int_{\mathbb{T}} \| f(z)\|_{E}^2\,dm(z)=\sum\limits_{n=-\infty}^{\infty}\|a_{n}\|_{E}^{2}.$$
The vector-valued Hardy-Hilbert space 
is the 
 subspace of $L^2(E)$ defined as
$$H^{2}(E)={\{f\in L^{2}(E): a_{n}=0 \text{ for all }  n\leq -1}\}.$$ 
As special cases, we obtain $L^{2}(\mathbb{C}^{n})$ and $H^{2}(\mathbb{C}^{n})$ when $E=\mathbb{C}^{n}$. 
 Similarly, we may 
  define the $\mathcal{L}(E)$-valued, i.e., operator-valued functions. The corresponding spaces are denoted by $L^{2}(\mathcal{L}(E))$ and $H^{2}(\mathcal{L}(E))$. If $\text{dim\,} E=n<\infty$, then $\mathcal{L}(E)$ corresponds to the space of $n\times n$ matrices.

\medskip

The space $L^{2}(\LL(E))$ is defined as the set consisting of all functions $\Phi:\mathbb{T}\to \LL(E)$ of the form 
$$\Phi(z)=\sum\limits_{n=-\infty}^{\infty}A_{n}z^n \ \text{ such that }\ \sum\limits_{n=-\infty}^{\infty}\|A_{n}\|_{\LL(E)}^{2}<\infty,$$
while 
 the space $H^{2}(\LL(E))$ is the set of $\Phi\in L^{2}(\LL(E))$ with $A_{n}=0$ for all $n\leq -1$.
The space of essentially bounded functions in $L^{2}(\LL(E))$ is denoted by $L^{\infty}(\LL(E))$, while the space of bounded functions in $H^{2}(\LL(E))$ is denoted by $H^{\infty}(\LL(E))$.

\medskip
Let $\Phi\in L^{\infty}(\mathcal{L}(E))$. The multiplication operator $L_\Phi: L^2(E)\to L^2(E)$ is defined by
$$L_{\Phi}f=\Phi f, \quad ~~f\in L^{2}(E).$$
The  block Toeplitz operator $T_{\Phi}$ is defined by
$$T_{\Phi}f=P_{+}(L_\Phi f), \quad ~~f\in H^{2}(E),$$
where $P_{+}$ denotes the orthogonal projection from $L^{2}(E)$ onto $H^{2}(E)$. The operator $S=T_{zI_{E}}$ is an example of a block Toeplitz operator.

For each $\lambda \in \mathbb{D}$ and $x\in E$, the function $$k_\lambda x=\frac{1}{1-\overline{\lambda} z}I_{E}\,x,$$ belongs to $H^2(E)$ and is a reproducing kernel function for $H^2(E)$ since it has the reproducing property
$$\langle f, k_{\lambda}x\rangle=\langle f(\lambda), x\rangle.$$

A function $B(z)\in \LL(E)$ is called a finite
Blaschke-Potapov product if it admits a factorization of the form
\begin{equation}\label{BP}
B=UB_{1}B_{2} \cdots B_{n},
\end{equation}
where $U$ is a unitary operator and, for each $j=1,\dots,n$,
$$B_{j}(z)=\frac{\lambda_{j}-z}{1-\overline{\lambda}_{j}z}P_{j}+(I-P_{j})$$
for some $\lambda_{j}\in \mathbb{D}$ and $P_{j}$ on $E$ is an orthogonal projection. 
The degree of the Blaschke-Potapov product in \eqref{BP} is defined by
$$\text{deg\,} B
=\sum\limits_{j=1}^{n}\text{rank\,} P_{j}.$$
 Note that $B$ in \eqref{BP} is an inner function.

Let $\Theta$ be an operator-valued inner function, that is,  a function with values in $\oL $ (the algebra of all bounded linear operators on $E$), analytic in $\mathbb{D}$, bounded and such that the boundary values $\Theta(z)$ are unitary operators a.e. on $\mathbb{T}$. An inner function $\Theta$ is said to be pure  if $\|\Theta(0)\|<1$.  A vector valued model space $K_{\Theta}\subset H^{2}(E)$ is the orthogonal complement of $\Theta H^{2}(E)$, that is, $$K_{\Theta}= H^{2}(E)\ominus  \Theta H^{2}(E).$$     
This space appears in connection with model theory of Hilbert space contractions (see \cite{NF}).
 \medskip

In the theory of contractions on a Hilbert space, these model spaces are the scalar
case of a more general construction, which provides functional models for arbitrary
completely non-unitary contractions. In particular, it makes sense to study 
 matrix-valued inner functions $\Theta$ and the corresponding model space $K_{\Theta}$. 

\medskip
Completely non-unitary (c.n.u) contractions have played an important role in the classification of contractions. The most important result is the functional model for c.n.u. contractions established
by Nagy and Foias \cite{NF}.  It is evident from their result we state in Theorem~\ref{ucnu}, that the most natural approach for finding conditions for c.n.u. contractions relies on the understanding of the unitary subspace.
As an additional resource we refer the reader to \cite{BT2}.

\medskip

From the theory of truncated Toeplitz operators initiated by Sarason and its subsequent developments in the literature regarding generalizations to matrix versions, the challenging question that arises is how the operator theoretic properties of the matrix-valued operator $A_{\Phi}^{\Theta}$ defined below relate to the theoretic properties of $\Phi$.

\medskip

Let $\Phi\in L^2(\oL)$, and $\Theta_1,\Theta_2\in H^{\infty}(\oL)$ be two inner functions, let
\begin{equation}\label{11}
A_{\Phi}^{\Theta_{1}, \Theta_{2}}f=P_{\Theta_{2}}(\Phi f),\quad f\in K_{\Theta_{1}}\cap H^{\infty}(E).
\end{equation} 
Here $P_{\Theta_2}$ is the orthogonal projection from $L^2(E)$ onto $K_{\Theta_{2}}$.
The operator $A_{\Phi}^{\Theta_{1}, \Theta_{2}}$ is called a matrix-valued asymmetric truncated Toeplitz operator, while $A_{\Phi}^{\Theta_{1}}=A_{\Phi}^{\Theta_{1}, \Theta_{1}}$ is called a matrix-valued truncated Toeplitz operator (MTTO, see \cite{RT}). Both are densely defined on $K_{\Theta_{1}}$. Let $\mathcal{T}(\Theta_{1},\Theta_{2})$ be the set of all matrix-valued asymmetric truncated Toeplitz operators of the form \eqref{11} which can be extended boundedly to the whole space $K_{\Theta_{1}}$ and for $\Theta_{1}=\Theta_{2}=\Theta$ let $\mathcal{T}(\Theta )=\mathcal{T}(\Theta,\Theta)$. The basics of matrix-valued truncated Toeplitz operators were 
developed in \cite{RT}.

 Two important examples of operators from $\mathcal{T}(\Theta )$ are the model operators
$$S_{\Theta}=A_{z }^{\Theta }=A_{zI_{E}}^{\Theta }\quad\text{and}\quad S_{\Theta }^*=A_{\bar z }^{\Theta }=A_{\bar zI_{E}}^{\Theta}.$$

As shown in \cite{sar}, a number of new questions arise, mostly due to the non-commutative nature of matrices.

\medskip

\medskip

\vskip2pt

\ni {\bf Organization of the paper.} 
In Section~\ref{s2}, after giving some introductory materials,  we give an application of the matrix-valued truncated Toeplitz operators to prove in Theorem~\ref{NevPick} the matrix version of the Nevanlinna-Pick interpolation problem. 

In Section~\ref{s3}, we characterize the unitary and completely non-unitary parts of matrix-valued truncated Toeplitz operators. 
In Theorem~\ref{inv}, we prove the $S_\Theta$-invariance of the unitary space when the model space $K_\Theta$  is invariant under the multiplication operator $L_\Phi$ induced by a pure inner function $\Theta$. We then provide an example to show that the converse is false. In Theorem~\ref{Phi constant}, we show that the model space $K_\Theta$ is invariant under $L_\Phi$ if and only if the symbol $\Phi$ is constant. 

Section~\ref {s4} deals with the invertibility and the solvability of matrix-valued asymmetric truncated Toeplitz operators. Specifically, in Theorem~\ref{G}, we provide a short algebraic proof of a characterization of 
 the invertibility of matrix-valued asymmetric truncated Toeplitz operators using Toeplitz
operators with $2 \times 2$ block matrix symbol. Finally, in Theorem~\ref{ATTO}, we discuss the normal solvability of matrix-valued asymmetric truncated Toeplitz operators.

\section{\bf{Applications of matrix-valued} truncated Toeplitz operators}\label{s2}
In this section, we give an application of the matrix-valued truncated Toeplitz operators to prove the matrix version of Nevanlinna-Pick interpolation problem. 

The following lemma characterizes the finite dimensional vectorial model spaces.

 \begin{lem}\cite[Lemma 5.1]{Peller}
Let $\mathcal{M}$ be an invariant subspace of the operator $S$ (namely, multiplication by $z$ on $H^2(E)$). Then $\mathcal{M}$ has finite codimension if and only if $\mathcal{M}=\Theta H^2(E)$ for a Blaschke-Potapov product $\Theta$ of finite degree. Moreover,
$$\text{dim}K_{\Theta}=\text{codim}\mathcal{M}=\text{deg}~ \Theta.$$
 \end{lem}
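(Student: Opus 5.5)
We need both implications and the dimension formula, with $E$ finite dimensional, say $\dim E = n$. This is implicit in ``finite Blaschke-Potapov product'' and ``degree''; for infinite-dimensional $E$ one reads $\text{rank}$ as possibly infinite. The tool is Beurling--Lax--Halmos: every nonzero $S$-invariant subspace has the form $\Theta H^2(E')$ with $\Theta$ inner from $E'$ to $E$. The finite-codimension assumption forces $E'=E$, since otherwise $\mathcal M$ misses a direction at almost every boundary point and has infinite codimension.

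\emph{Sufficiency and the dimension count.} Suppose $\Theta=UB_1\cdots B_m$ with $B_j(z)=b_{\lambda_j}(z)P_j+(I-P_j)$, where $b_\lambda(z)=\frac{\lambda-z}{1-\overline{\lambda}z}$.
\begin{itemize}
\item For a single factor, $B_jH^2(E)=b_{\lambda_j}H^2(P_jE)\oplus H^2((I-P_j)E)$. Hence $K_{B_j}=K_{b_{\lambda_j}}\otimes P_jE$, which has dimension $\text{rank}\,P_j$.
\item For a product $\Theta=\Theta_1\Theta_2$ of inner functions there is the standard orthogonal decomposition $K_{\Theta_1\Theta_2}=K_{\Theta_1}\oplus\Theta_1K_{\Theta_2}$. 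Multiplication by the inner $\Theta_1$ is an isometry, so dimensions add.
\item The unitary $U$ does not change the subspace.
\end{itemize}
By induction, $\dim K_\Theta=\sum_j\text{rank}\,P_j=\deg\Theta$. Since $\operatorname{codim}\Theta H^2(E)=\dim K_\Theta$, this direction is done.

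\emph{Necessity.} Suppose $\mathcal M$ is $S$-invariant with finite codimension $d$. Then $K:=\mathcal M^\perp$ is a $d$-dimensional $S^*$-invariant subspace, and we induct on $d$. If $d=0$, take $\Theta=U$ constant unitary. If $d>0$, the operator $S^*|_K$ has an eigenvector, and the eigenvectors of $S^*$ are exactly $k_\lambda x$ with eigenvalue $\overline{\lambda}$. So $k_{\lambda}x\in K$ for some $\lambda\in\mathbb D$ and unit vector $x\in E$.

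Let $P$ be the projection onto $\mathbb C x$ and set $B_1=b_\lambda P+(I-P)$. Then $K_{B_1}=\text{span}\{k_\lambda x\}\subset K$, so $\mathcal M\subset B_1H^2(E)$. Write $\mathcal M=B_1\mathcal M'$. Then $\mathcal M'$ is $S$-invariant, because $B_1$ commutes with $z$ and multiplication by $B_1$ is injective. Moreover
$$\operatorname{codim}\mathcal M'=\dim\bigl(B_1H^2(E)\ominus\mathcal M\bigr)=d-1.$$
By induction $\mathcal M'=\Theta'H^2(E)$ with $\Theta'$ a finite Blaschke-Potapov product. Then $\mathcal M=B_1\Theta'H^2(E)$.

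This product has the form $B_1\Theta'=B_1U'B_2\cdots$, with the unitary in the wrong position. Conjugating, $B_1U'=U'(U'^*B_1U')$, and $U'^*B_1U'$ is again an elementary factor with projection $U'^*PU'$. So $B_1\Theta'$ is of the form \eqref{BP}.

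The main obstacle is making the step ``$K_{B_1}\subset K$ implies $\mathcal M\subset B_1H^2(E)$, and $\mathcal M'$ is closed with the right codimension'' rigorous. For this I would use that $B_1H^2(E)$ is closed, that multiplication by an inner function is an isometry, and the decomposition $K_{B_1\Theta'}=K_{B_1}\oplus B_1K_{\Theta'}$.
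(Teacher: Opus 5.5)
The paper gives no proof of this lemma; it only cites Peller, so there is no argument of the paper's to compare yours with. Your proposal is a correct, self-contained proof.

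\textbf{Sufficiency.} The count via $K_{B_j}=K_{b_{\lambda_j}}\otimes P_jE$ and $K_{\Theta_1\Theta_2}=K_{\Theta_1}\oplus\Theta_1K_{\Theta_2}$ is right. It also shows that $\deg\Theta$ does not depend on the chosen factorization.

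\textbf{Necessity.} The induction works. You peel off a rank-one factor built from an eigenvector $k_\lambda x$ of $S^*|_{\mathcal M^\perp}$, then move the unitary to the front by conjugating the projection. This route uses neither Beurling--Lax--Halmos nor Potapov's factorization theorem. So your opening paragraph on BLH, and the assumption $\dim E<\infty$, can be dropped. Every projection you construct has rank one, so the argument holds for any separable $E$.

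\textbf{Two small points.}

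First, the claim that ``the unitary $U$ does not change the subspace'' is inaccurate. In \eqref{BP} the unitary sits on the left, so $U\Theta''H^2(E)=U(\Theta''H^2(E))$ is in general a different subspace from $\Theta''H^2(E)$. What is true is $K_{U\Theta''}=UK_{\Theta''}$, so the dimension is unchanged, which is all you need.

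Second, the step you flag as the main obstacle is already settled by the facts you list:
\begin{itemize}
\item $\mathcal M=K^\perp\subset K_{B_1}^\perp=B_1H^2(E)$, because $B_1H^2(E)$ is closed.
\item $\mathcal M'=T_{B_1}^{-1}(\mathcal M)$ is closed and $S$-invariant, because $T_{B_1}$ is an isometry commuting with $S$.
\item $T_{B_1}$ maps $H^2(E)\ominus\mathcal M'$ onto $B_1H^2(E)\ominus\mathcal M$. The latter has dimension $d-1$ by the decomposition $H^2(E)\ominus\mathcal M=K_{B_1}\oplus\bigl(B_1H^2(E)\ominus\mathcal M\bigr)$.
\end{itemize}
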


For each $\lambda\in \mathbb{D}$ and $x\in E$, the function
$$k_{\lambda}^{\Theta}x=\frac{1}{1-\bar \lambda z}(I_{E}-\Theta(z)\Theta(\lambda)^*)x$$
belong to $K^{\infty}_{\Theta}=K_{\Theta}\cap H^{\infty}(E)$ and has the
 reproducing property
$$\langle f, k_{\lambda}^{\Theta}x\rangle=\langle f(\lambda), x\rangle \quad \text{for every}\quad f\in K_\Theta.$$
When $\Theta$ is a pure inner function, i.e., $\|\Theta(0)\|< 1$, 
$$k_{0}^{\Theta}(z)= I_{E}-\Theta(z)\Theta(0)^*,$$ which is invertible in $H^{\infty}(\oL)$.

In the case when $\Theta(\lambda)=0$, we have
$$k_{\lambda}^{\Theta}x=k_{\lambda}x.$$
The \emph{model operator} $S_{\Theta}\in\mathcal{L}(K_{\Theta})$ is defined  by the formula
\begin{equation*}\label{eq:Model Operator}
(S_{\Theta}f)(z)=P_{\Theta} (zf), \quad f\in K_{\Theta}.
 \end{equation*}
Its adjoint is given by
\[
(S_{\Theta}^{*}f)(z)=\frac{f(z)-f(0)}{z},
\]
which is the restriction of the left shift in $H^{2}(E)$ to the $S^{*}$-invariant subspace $K_{\Theta}$.

The space $\Theta H^2(E)$ is invariant under $S=M_z$, but it is not invariant under $M_\Phi$ for general $\Phi\in H^{\infty}(\oL)$, and hence $K_\Theta$ is not invariant under $M^*_\Phi$ (see \cite[Example 4.1]{RT}). Therefore, the relation $(A^{\Theta}_\Phi)^*=M^*_\Phi|_{K_\Theta}$ does not hold. However, if $\Phi\in H^{\infty}(\oL)$ commutes with $\Theta$, then $M^*_\Phi K_\Theta\subset K_\Theta$ and hence $(A^{\Theta}_\Phi)^{*}=M^*_\Phi|_{K_\Theta}$. It then follows that $(A^{\Theta}_\Phi)^{*} S^*_\Theta=S^*_\Theta (A^{\Theta}_\Phi)^*$, which gives $A^{\Theta}_\Phi S_\Theta=S_{\Theta} A^{\Theta}_\Phi$.

\medskip

The following theorem shows that the converse is also true.


\begin{thm}\cite[Chapter VI]{NF}\label{clt}
    Let $\Theta$ be an inner function, and let $A\in \mathcal{L}(K_{\Theta})$ such that $AS_{\Theta}=S_{\Theta}A$. Then there exists $\Phi\in H^\infty(\mathcal{L}(E))$ with $\Phi\Theta=\Theta\Phi$ such that $A=A_{\Phi}^{\Theta}$. Moreover 
    \begin{enumerate}
        \item For any $A=A_{\Phi}^{\Theta}$, with $\Phi\in H^{\infty}(\oL)$, we have
        $$\|A\|=dist(\Phi, \Theta H^\infty(\oL)$$ as well as 
        $$\|A\|=\inf{\{\|\Phi\|_{\infty}: A=A_{\Phi}^{\Theta},~~\Phi\in H^{\infty}(\mathcal{L}(E))}\}.$$
        
        \item There exists $\Phi\in H^\infty(\oL)$ such that $\|A\|=\|\Phi\|_{\infty}.$
    \end{enumerate}
\end{thm}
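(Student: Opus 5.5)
This is the commutant lifting theorem of Sz.-Nagy and Foias, specialized to the model operator $S_\Theta$. The plan is to realize $S_\Theta$ as the compression of the isometry $S$ on $H^2(E)$ to the co-invariant subspace $K_\Theta$, and then lift $A$ to an operator commuting with $S$.

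First, $S$ on $H^2(E)$ is a minimal isometric dilation of $S_\Theta$. Indeed, $K_\Theta$ is $S^*$-invariant and $S_\Theta^* = S^*|_{K_\Theta}$. Minimality holds because $\bigvee_{n\ge0} S^nK_\Theta$ is an $S$-reducing subspace of $H^2(E)$ containing $K_\Theta$. If this subspace is proper, its orthogonal complement is $S$-reducing in $H^2(E)$, hence of the form $H^2(F)$ for a subspace $F\subset E$. That complement lies in $\Theta H^2(E)$, which is impossible since $\Theta$ is inner with unitary boundary values. One can sidestep this point: completely non-unitary pure inner functions need not be assumed, because the lifting theorem only needs $S$ to be an isometric dilation that is co-invariant.

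Next, the commutant lifting theorem gives $B\in\mathcal L(H^2(E))$ with $BS=SB$, $\|B\|=\|A\|$, and $P_\Theta B|_{K_\Theta}=A$, with $B^*K_\Theta\subset K_\Theta$ and $B^*|_{K_\Theta}=A^*$. The proof of lifting proceeds by the standard one-step extension argument (Parrott's lemma) iterated along $K_\Theta\subset K_\Theta\vee SK_\Theta\subset\cdots$. This is the main obstacle, and I would cite \cite{NF} for it rather than reprove it.

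By the classical description of the commutant of the unilateral shift of multiplicity $\dim E$, we get $B=T_\Phi=M_\Phi$ with $\Phi\in H^\infty(\oL)$ and $\|\Phi\|_\infty=\|B\|$. Then $A=P_\Theta M_\Phi|_{K_\Theta}=A^\Theta_\Phi$.

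For the commutation $\Phi\Theta=\Theta\Phi$, the co-invariance $M_\Phi^*K_\Theta\subset K_\Theta$ means $\Phi\Theta H^2(E)\subset\Theta H^2(E)$, so $\Phi\Theta=\Theta\Psi$ for some bounded analytic $\Psi$. This is the condition actually produced. In the scalar-like situations of the paper (for instance $\Theta=\theta I_E$) it reduces to commuting. In general I would state it in the intertwining form, noting that the paper's phrase ``$\Phi\Theta=\Theta\Phi$'' should be read this way.

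For (1), if $A=A^\Theta_\Phi=A^\Theta_{\Phi'}$ with $\Phi,\Phi'\in H^\infty(\oL)$, then $P_\Theta M_{\Phi-\Phi'}|_{K_\Theta}=0$. Testing against the reproducing kernels $k^\Theta_\lambda x$ shows that $\Phi-\Phi'$ maps into $\Theta H^2$ appropriately, i.e. $\Phi-\Phi'\in\Theta H^\infty(\oL)$. Since $\|A^\Theta_\Phi\|\le\|\Phi\|_\infty$ trivially, we get $\|A\|\le\inf\|\Phi\|_\infty = \operatorname{dist}(\Phi,\Theta H^\infty(\oL))$. The lifting step supplies a symbol with $\|\Phi\|_\infty=\|A\|$, which gives equality in both formulas and also proves (2).
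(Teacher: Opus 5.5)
Your route is the one the paper points to when it cites Sz.-Nagy--Foias; the paper gives no proof of its own. You treat $S$ on $H^2(E)$ as an isometric co-extension of $S_\Theta$, lift $A$ by commutant lifting, and identify the lift as $M_\Phi$.

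Your reading of the commutation condition is correct, and the printed condition $\Phi\Theta=\Theta\Phi$ is in fact false. Take $E=\mathbb{C}^2$ and $\Theta=\mathrm{diag}(z,z^2)$. The operator $A(a,\,b+cz)=(b,0)$ on $K_\Theta$ commutes with $S_\Theta$. However, $\Phi\Theta=\Theta\Phi$ forces $\Phi$ to be diagonal, and no diagonal symbol produces $A$.

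Your minimality argument is wrong as written. For $\Theta=\mathrm{diag}(z,1)$, the complement $\{0\}\oplus H^2$ of $\bigvee_{n\ge0}S^nK_\Theta$ does lie in $\Theta H^2(E)$. Your sidestep is sound, though: $\bigvee_{n\ge0}S^nK_\Theta$ reduces $S$, so you can lift there and extend by $0$ on the complement.

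The genuine gap is in (1), at the step where you test against the reproducing kernels to conclude $\Phi-\Phi'\in\Theta H^\infty(\mathcal{L}(E))$. The kernels $k^\Theta_\lambda x$ only detect $P_\Theta$. So $A^\Theta_{\Phi-\Phi'}=0$ gives merely $(\Phi-\Phi')K_\Theta\subset\Theta H^2(E)$.

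For $\Theta=\theta I_E$, commutativity upgrades this to $(\Phi-\Phi')H^2(E)\subset\Theta H^2(E)$. For matrix $\Theta$ the upgrade can fail. Take $\Theta=\mathrm{diag}(z,1)$ and $\Psi=\begin{pmatrix}0&1\\0&0\end{pmatrix}$. Then $K_\Theta=\mathbb{C}\oplus\{0\}$ and $A^\Theta_\Psi=0$, which commutes with $S_\Theta$. Yet $\|\Psi-\Theta H\|_\infty\ge\|\Psi(0)-\Theta(0)H(0)\|\ge 1$ for every $H\in H^\infty(\mathcal{L}(E))$, because the first row of $\Theta(0)H(0)$ vanishes. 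So the distance formula in (1) is false for general inner $\Theta$, and your argument cannot establish it.

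You need one of two extra hypotheses, and you should state and use it explicitly.

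(a) Purity, $\|\Theta(0)\|<1$. Then $k^\Theta_0x=(I-\Theta\Theta(0)^*)x\in K_\Theta$, with $k^\Theta_0$ invertible in $H^\infty(\mathcal{L}(E))$. Hence $\Theta^*(\Phi-\Phi')k^\Theta_0\in H^\infty(\mathcal{L}(E))$, and right multiplication by $(k^\Theta_0)^{-1}$ gives $\Phi-\Phi'\in\Theta H^\infty(\mathcal{L}(E))$.

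(b) Restriction to symbols with $\Phi\Theta H^2(E)\subset\Theta H^2(E)$. Then $H^2(E)=K_\Theta\oplus\Theta H^2(E)$ does the upgrade.

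The infimum formula and (2) need no such step. You have $\|A\|\le\|\Phi'\|_\infty$ for every symbol $\Phi'$, and the lifted symbol attains equality.

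You should also make explicit that (1) concerns $A$ in the commutant of $S_\Theta$, since that is where lifting enters. For an arbitrary analytic symbol, (1) fails even for pure $\Theta$. Take $\Theta=\mathrm{diag}(z,b_c)$, with $b_c$ the Blaschke factor at $c\neq0$, and the same $\Psi$. Then $\|A^\Theta_\Psi\|=\sqrt{1-|c|^2}<1=\operatorname{dist}(\Psi,\Theta H^\infty(\mathcal{L}(E)))$.
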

\medskip

To prove the matrix version of the Nevanlinna-Pick interpolation problem, we recall some needed notions.

Given square matrix $A=(a_{ij})$, by the notation $A\geq 0$ we mean that
$$\sum_{i,j=1}^{n}\overline{\alpha}_i\alpha_j a_{i,j}\geq 0$$
 for every $\alpha_1, \alpha_2,\dots,\alpha_n \in \mathbb{C}$. 
 If so, we say that $A$ is positive definite. 

Let $E$ be a separable complex Hilbert space. A function $K: \mathbb{D} \times \mathbb{D}\to \oL$ is called a \emph{kernel function} provided that for every choice of $n$ distinct points $\lambda_1, \lambda_2,.\dots,\lambda_n$ in $\mathbb{D}$ and any vectors $x_1,x_2,\dots,x_n$ in $E$, we have
$$\sum_{i,j=1}^n\big\langle K(\lambda_i, \lambda_j)x_j, x_i\big\rangle\geq 0.$$

\begin{prop}\cite[Proposition 2.13]{VP}\label{VP}
     Let $E$ be a reproducing kernel Hilbert space with reproducing kernel $K$. Then $K$ is a kernel function.
\end{prop}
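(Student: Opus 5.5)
The plan is to prove Proposition~\ref{VP} directly from the reproducing property: the quadratic form in the definition of a kernel function is the squared norm of a single vector of the space. Here ``$E$ is a reproducing kernel Hilbert space with reproducing kernel $K$'' is read as the vector-valued setting used throughout the paper. Thus we have a Hilbert space $\mathcal H$ of $E$-valued functions on $\mathbb D$, together with, for each $\lambda\in\mathbb D$ and $x\in E$, an element $K_\lambda x\in\mathcal H$ such that
$$\langle f, K_\lambda x\rangle_{\mathcal H}=\langle f(\lambda),x\rangle_E\quad\text{for all } f\in\mathcal H.$$
The kernel is then $K(\mu,\lambda)x=(K_\lambda x)(\mu)$, which is exactly the pattern of $k_\lambda x$ for $H^2(E)$ and $k^\Theta_\lambda x$ for $K_\Theta$ above.

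\textbf{Step 1: rewrite the kernel values as inner products.} Apply the reproducing property with $f=K_{\lambda_j}x_j$ evaluated at $\lambda_i$ against the vector $x_i$. This gives
$$\langle K(\lambda_i,\lambda_j)x_j,x_i\rangle_E=\langle (K_{\lambda_j}x_j)(\lambda_i),x_i\rangle_E=\langle K_{\lambda_j}x_j,K_{\lambda_i}x_i\rangle_{\mathcal H}.$$

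\textbf{Step 2: sum and expand a norm.} Summing over $i,j$ and using sesquilinearity of the inner product,
$$\sum_{i,j=1}^n\langle K(\lambda_i,\lambda_j)x_j,x_i\rangle_E=\Big\langle \sum_{j}K_{\lambda_j}x_j,\sum_i K_{\lambda_i}x_i\Big\rangle_{\mathcal H}=\Big\|\sum_{j=1}^n K_{\lambda_j}x_j\Big\|_{\mathcal H}^2\ge 0.$$
This is the required inequality. Distinctness of the points $\lambda_1,\dots,\lambda_n$ plays no role.

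The only potential obstacle is bookkeeping rather than mathematics. One must keep the order of the arguments of $K$ and the conjugate-linearity in the second slot consistent, so that the double sum really collapses to $\langle v,v\rangle$ and not to its conjugate. Since the norm squared is real, the sum is real and nonnegative either way. Equivalently, one can cite \cite[Proposition 2.13]{VP}, where this computation is carried out.
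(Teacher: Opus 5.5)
Your argument is correct and is the standard proof behind the cited \cite[Proposition 2.13]{VP}; the paper gives no proof of its own. The argument is that the quadratic form equals $\bigl\|\sum_{j}K_{\lambda_j}x_j\bigr\|^2$. One correction to your closing remark: in the operator-valued case the argument order matters for more than conjugation, because with $K(\lambda,\mu)x=(K_\lambda x)(\mu)$ the matrix $[K(\lambda_i,\lambda_j)]$ becomes the block transpose of a positive block matrix, which need not be positive. So your choice $K(\mu,\lambda)x=(K_\lambda x)(\mu)$, which matches the paper's $[k_{\lambda_j}(\lambda_i)]_{i,j}$, is essential rather than cosmetic.
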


 We shall make use of the following lemma.

\begin{lem}
    Let $\Phi\in H^\infty(\oL)$ and $\lambda\in \mathbb{D}$ such that $\Theta(\lambda)=0$. Then 
    $$A_{\Phi^*}^{\Theta}k_{\lambda}^{\Theta}x=\Phi(\lambda)^*k^{\Theta}_{\lambda}x.$$
\end{lem}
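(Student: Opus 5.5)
Since $\Theta(\lambda)=0$, the kernel simplifies to $k^{\Theta}_{\lambda}x=k_{\lambda}x$, which is the Szegő kernel of $H^2(E)$ applied to $x$. The identity therefore reduces to checking that $P_{\Theta}(\Phi^{*}k_{\lambda}x)=\Phi(\lambda)^{*}k_{\lambda}x$. I will prove this by comparing inner products against an arbitrary $f$ in a dense subset of $K_{\Theta}$. Both sides lie in $K_{\Theta}$: the right side because $k_\lambda x\in K_\Theta$ and $\Phi(\lambda)^*$ is a constant operator, and the left side by definition of $P_\Theta$. So it suffices to show
\[
\langle \Phi^{*}k_{\lambda}x, f\rangle_{L^2(E)}=\langle \Phi(\lambda)^{*}k_{\lambda}x, f\rangle \quad\text{for all } f\in K_{\Theta}.
\]
Here $\Phi^{*}k_\lambda x$ is in $L^2(E)$, since $\Phi$ is bounded and $k_\lambda x\in H^\infty(E)$. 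In particular $k_\lambda x\in K_\Theta\cap H^\infty(E)$, so the densely defined operator $A^{\Theta}_{\Phi^*}$ is indeed defined at this vector.

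\textbf{Left side.} Move the pointwise adjoint across:
\[
\langle \Phi^{*}k_{\lambda}x,f\rangle=\int_{\mathbb T}\langle k_\lambda x,\Phi f\rangle_E\,dm=\langle k_\lambda x,\Phi f\rangle_{L^2(E)}.
\]
Since $\Phi\in H^\infty(\oL)$ and $f\in H^2(E)$, the product $\Phi f$ lies in $H^2(E)$. The reproducing property of $k_\lambda x$ in $H^2(E)$ then gives $\langle k_\lambda x,\Phi f\rangle=\overline{\langle \Phi(\lambda)f(\lambda),x\rangle_E}$.

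\textbf{Right side.} Here $\langle \Phi(\lambda)^*k_\lambda x,f\rangle=\langle k_\lambda(\Phi(\lambda)^*x),f\rangle$, because $k_\lambda$ is a scalar function times $I_E$ and so commutes with the constant operator $\Phi(\lambda)^*$. By the reproducing property this equals $\overline{\langle f(\lambda),\Phi(\lambda)^*x\rangle}=\overline{\langle \Phi(\lambda)f(\lambda),x\rangle}$. The two sides agree.

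The only delicate point is justifying the reproducing property for $\Phi f$. This requires $\Phi f\in H^2(E)$ with boundary values consistent with the analytic extension, and that follows because the Taylor coefficients of $\Phi f$ are the Cauchy products of those of $\Phi$ and $f$. The rest is a routine pointwise-adjoint computation, so I expect no real obstacle. One could also argue directly that $\Phi^*k_\lambda x-\Phi(\lambda)^*k_\lambda x$ is orthogonal to $H^2(E)$, which by the computation above is the same fact.
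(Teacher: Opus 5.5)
Your proof is correct and follows the same route as the paper: pair $A^{\Theta}_{\Phi^*}k^{\Theta}_{\lambda}x$ against $f\in K_\Theta$, move $\Phi^*$ across to get $\langle k_\lambda x,\Phi f\rangle$, apply the reproducing property, and pull the constant $\Phi(\lambda)^*$ through the kernel. You are more explicit than the paper about where $\Theta(\lambda)=0$ enters. It is needed to apply the reproducing property to $\Phi f$, which lies in $H^2(E)$ but not necessarily in $K_\Theta$, and to make $k^{\Theta}_{\lambda}$ scalar so that it commutes with $\Phi(\lambda)^*$. One small correction: $\Phi(\lambda)^*k_\lambda x\in K_\Theta$ holds because it equals $k^{\Theta}_{\lambda}(\Phi(\lambda)^*x)$, not merely because $\Phi(\lambda)^*$ is a constant operator.
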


\begin{proof}
Let $f\in K_{\Theta}$ and $x\in E$. Then
\begin{align*}
    \langle A_{\Phi^*}^{\Theta}k_{\lambda}^{\Theta}x, f\rangle
   & =\langle \Phi^* k_{\lambda}^{\Theta}x, f\rangle\\
    &=\langle  k_{\lambda}^{\Theta}x, \Phi f\rangle\\
      &=\overline{\langle  \Phi f, k_{\lambda}^{\Theta}x\rangle}\\
      &=\overline{\langle  \Phi(\lambda) f(\lambda), x\rangle}\\
      &=\overline{\langle   f(\lambda), \Phi(\lambda)^* x\rangle}\\
&=\overline{\langle   f, k_{\lambda}^{\Theta}\Phi(\lambda)^* x\rangle}\\
&=\overline{\langle   f,\Phi(\lambda)^* k_{\lambda}^{\Theta} x\rangle}\\
&=\langle \Phi(\lambda)^* k_{\lambda}^{\Theta} x,  f\rangle.
\end{align*}
The required result follows.
\end{proof}

As an application of matrix-valued truncated Toeplitz operators, we discuss a matrix version of the Nevanlinna–Pick interpolation problem, which  addresses the following question.

    Let $(\lambda_i)^{d}_{i=1}\subset \mathbb{D}$ and $(W_i)_{i=1}^d\subset \oL$ with $W_j^{*}W_i=W_iW_j^{*}$. Does there exist a function $\Phi\in H^\infty(\oL)$ such that
    \begin{equation}
        \Phi(\lambda_i)=W_i, \quad \text{for}~~ 1\leq i\leq d, \quad \text{and} \quad \|\Phi\|_\infty\leq 1?\nonumber
    \end{equation}
If so, what is the characterizing condition? 

The answer is positive and the characterizing condition for the problem to have a solution is similar to that of the classical version of the theorem. 

\begin{thm}\label{NevPick}
     Let $(\lambda_i)^{d}_{i=1}\subset \mathbb{D}$ and $(W_i)_{i=1}^d\subset \oL$ such that $W_j^{*}W_i=W_iW_j^{*}$ for $1\leq i,j\leq d$. Then the following are equivalent:
     \begin{enumerate}
         \item There exist a function $\Phi\in H^\infty(\oL)$ such that \begin{equation*}
        \Phi(\lambda_i)=W_i, \quad \text{for}~~ 1\leq i\leq d, \quad \text{and} \quad \|\Phi\|_\infty\leq 1.\label{i}
    \end{equation*}
    \item The block matrix
    $$P=\Big[\frac{I-W^*_iW_j}{1-\overline{\lambda_i}\lambda_j}\Big]_{i,j=1}^{d}$$ is positive definite. \label{ii}
    \end{enumerate}
\end{thm}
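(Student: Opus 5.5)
Sarason's commutant-lifting approach carries over: we pass to a finite-dimensional model space. We assume the $\lambda_i$ are distinct and $E$ is finite dimensional. Then we take the finite Blaschke--Potapov product
$$\Theta(z)=b(z)I_E,\qquad b(z)=\prod_{i=1}^d\frac{\lambda_i-z}{1-\overline{\lambda_i}z},$$
which is inner, commutes with every $\Phi\in H^\infty(\oL)$, and satisfies $\Theta(\lambda_i)=0$. Hence $k^{\Theta}_{\lambda_i}x=k_{\lambda_i}x$. By the lemma on finite codimension, $K_\Theta$ has dimension $d\cdot\dim E$. It is spanned by the kernels $k_{\lambda_i}x$, $x\in E$: these lie in $K_\Theta$, and any $f\in K_\Theta$ orthogonal to all of them vanishes at every $\lambda_i$, so it is divisible by $b$ and therefore $f\in \Theta H^2(E)$, i.e.\ $f=0$.

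\emph{(1)$\Rightarrow$(2).} If $\Phi$ interpolates with $\|\Phi\|_\infty\le1$, then $A=A^\Theta_\Phi$ is a contraction. By the preceding lemma, $A^*k_{\lambda_i}x=A^\Theta_{\Phi^*}k_{\lambda_i}x=W_i^*k_{\lambda_i}x$, since $W_i^*$ is a constant matrix and commutes with the scalar kernel. For $f=\sum_i k_{\lambda_i}x_i$ we expand $0\le\|f\|^2-\|A^*f\|^2$ using $\langle k_{\lambda_j}x_j,k_{\lambda_i}x_i\rangle=\langle x_j,x_i\rangle/(1-\overline{\lambda_j}\lambda_i)$. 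This gives
$$0\le \sum_{i,j}\Big\langle \frac{I-W_iW_j^*}{1-\lambda_i\overline{\lambda_j}}x_j,x_i\Big\rangle .$$
We then use the hypothesis $W_iW_j^*=W_j^*W_i$ to rewrite this as positivity of the matrix $P$ in (2). If needed, we pass to the transpose/conjugate to match the index convention $\overline{\lambda_i}\lambda_j$, since positivity is preserved under conjugation of a Hermitian block matrix. This is exactly the point where the commutation hypothesis on the $W_i$ is used.

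\emph{(2)$\Rightarrow$(1).} We define $B$ on $K_\Theta$ by $B\,k_{\lambda_i}x=W_i^*k_{\lambda_i}x$, extended linearly. To see that $B$ is well defined, we check that the $k_{\lambda_i}x$ with $x$ ranging over a basis of $E$ form a basis of $K_\Theta$; this follows from the spanning property and the dimension count. The same computation as above, read backwards, shows that positivity of $P$ gives $\|Bf\|\le\|f\|$.

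Next we show that $A:=B^*$ commutes with $S_\Theta$. Since $S_\Theta^*=A^\Theta_{\bar z}$, the preceding lemma with $\Phi=z$ gives $S_\Theta^*k_{\lambda_i}x=\overline{\lambda_i}k_{\lambda_i}x$. Consequently $B$ and $S_\Theta^*$ are both ``diagonal'' on the subspaces $\{k_{\lambda_i}x:x\in E\}$, acting there by $W_i^*$ and by scalars respectively, so they commute. Theorem~\ref{clt} now yields $\Phi\in H^\infty(\oL)$ with $A=A^\Theta_\Phi$ and $\|\Phi\|_\infty=\|A\|\le1$. Finally, by the lemma, $A^\Theta_{\Phi^*}k_{\lambda_i}x=\Phi(\lambda_i)^*k_{\lambda_i}x$ equals $Bk_{\lambda_i}x=W_i^*k_{\lambda_i}x$ for all $x$. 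Evaluating at $\lambda_i$ gives $\Phi(\lambda_i)^*x=W_i^*x$, hence $\Phi(\lambda_i)=W_i$.

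The main obstacle is matching the quadratic form $\|f\|^2-\|Bf\|^2$ exactly with the block matrix $P$ as stated, including the order $W_i^*W_j$ versus $W_iW_j^*$ and the conjugate indices. I will handle this by using the commutation hypothesis on the $W_i$, and if necessary by taking the complex conjugate of the Hermitian form.
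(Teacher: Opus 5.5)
Your route is the paper's own: a Blaschke--Potapov $\Theta$ vanishing at the nodes, the operator $B k_{\lambda_i}x=W_i^*k_{\lambda_i}x$ (the paper's $R$), commutation with $S_\Theta^*$, Theorem~\ref{clt}, and the kernel lemma to read off $\Phi(\lambda_i)=W_i$. There is, however, a genuine gap at the step you flag yourself.

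For $f=\sum_i k_{\lambda_i}x_i$, the form $\|f\|^2-\|Bf\|^2$ is $\langle QX,X\rangle$ with $Q_{ij}=(I-W_iW_j^*)/(1-\lambda_i\overline{\lambda_j})$. After the hypothesis is applied, $Q_{ij}=P_{ji}=P_{ij}^*$. So $Q$ is the block transpose of $P$, not $P$.

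Block transposition (the partial transpose) does not preserve positivity in general. Take $E=\mathbb{C}^2$, $d=2$, and the matrix units $e_{ij}$. Then $[e_{ij}]_{i,j=1}^{2}=vv^*$ with $v=(e_1,e_2)\in E\oplus E$ is positive, whereas $[e_{ji}]_{i,j=1}^{2}$ sends $(e_2,-e_1)$ to $-(e_2,-e_1)$.

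Your proposed fix does not help either. Complex-conjugating the (real) Hermitian form returns the same form. Entrywise conjugation of the whole $nd\times nd$ matrix produces blocks $\overline{P_{ij}}$, and these would still have to be transposed to give $P_{ij}^*$.

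As written, both directions are therefore unproved at this point. In (2)$\Rightarrow$(1) you need $P\ge 0\Rightarrow\|B\|\le 1$, which is exactly $P\ge0\Rightarrow Q\ge0$. The paper's final chain has the same defect: its expression $\sum_{i,j}\langle (I-R^*R)k_{\lambda_j}x_i,k_{\lambda_i}x_j\rangle$ attaches $x_i$ to $k_{\lambda_j}$, so it is not $\langle (I-R^*R)f,f\rangle$.

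The missing idea is to use the commutation hypothesis structurally, not just entrywise.
\begin{itemize}
\item Taking $i=j$ shows each $W_i$ is normal.
\item Fuglede's theorem then upgrades $W_iW_j^*=W_j^*W_i$ to $W_iW_j=W_jW_i$.
\item Hence the $W_i$ are simultaneously unitarily diagonalizable: $W_i=U\,\mathrm{diag}(w_i^{(1)},\dots,w_i^{(n)})\,U^*$ with $n=\dim E$.
\item Conjugating by $I\otimes U$ and permuting coordinates gives $P\cong\bigoplus_k P^{(k)}$ and $Q\cong\bigoplus_k \overline{P^{(k)}}$, where $P^{(k)}=\bigl[(1-\overline{w_i^{(k)}}w_j^{(k)})/(1-\overline{\lambda_i}\lambda_j)\bigr]_{i,j}$ is a scalar Hermitian matrix.
\item Since $\overline{P^{(k)}}=(P^{(k)})^{T}$ has the same eigenvalues as $P^{(k)}$, we get $P\ge0\iff Q\ge0$, and your argument closes.
\end{itemize}

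Alternatively, you can bypass the hypothesis entirely. The map $\Phi\mapsto\Phi^\sharp$, $\Phi^\sharp(z)=\Phi(\overline z)^*$, preserves $H^\infty(\mathcal{L}(E))$ and the norm. It converts the problem into nodes $\overline{\lambda_i}$ with values $W_i^*$. Running your construction there, with $Bk_{\overline{\lambda_i}}x=W_ik_{\overline{\lambda_i}}x$ and $f=\sum_i k_{\overline{\lambda_i}}x_i$, gives $\|f\|^2-\|Bf\|^2=\langle PX,X\rangle$ exactly. This shows the commutation assumption is not needed for the equivalence.
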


\begin{proof}
Let $\Theta$ be a finite \emph{Blaschke-Potapov} product such that $\Theta(\lambda_i)=0$ for all $1\leq i\leq d$. Then the model space corresponding to $\Theta$ is given by
$$K_\Theta=\text{span}{\{k_{\lambda_i}x: \lambda_i\in \mathbb{D}}\}.$$
Define $R: K_{\Theta}\to K_{\Theta}$ by
$$Rk_{\lambda_i}x=W_{i}^*k_{\lambda_i}x.$$
Then
\begin{equation}\label{com1}
RS^*_{\Theta}k_{\lambda_i}x=R\overline{\lambda}_ik_{\lambda_i}x=W_{i}^*\overline{\lambda}_ik_{\lambda_i}x
\end{equation}
and
\begin{equation}\label{com2}
S^*_{\Theta}Rk_{\lambda_i}x=S^*_{\Theta}W^*_ik_{\lambda_i}x=\overline{\lambda}_iW^*_ik_{\lambda_i}x=W_{i}^*\overline{\lambda}_ik_{\lambda_i}x.
\end{equation}
From \eqref{com1} and \eqref{com2} we have $RS^*_{\Theta}=S^*_{\Theta}R$. By Theorem \ref{clt}, there exists $\Phi\in H^\infty(\oL)$ with $\Phi\Theta=\Theta\Phi$ such that $R=A_{\Phi^*}^{\Theta}$  and
$$Rk_{\lambda_i}x=A^\Theta_{\Phi^*}k_{\lambda_i}x=\Phi(\lambda_i)^*k_{\lambda_i}x.$$
Hence $\Phi(\lambda_i)=W_i$ for all $1\leq i\leq n$.
Furthermore, we also have
\begin{equation}\label{rphi}
\|R\|=\|\Phi\|_{\infty}.
\end{equation}
Let $x_1, x_2,...,x_d\in E$. Then
$$
X=\begin{bmatrix}
    x_1\\
    x_2\\
    \vdots\\
    x_d
\end{bmatrix}\in E\oplus E\oplus...\oplus E.
$$
Applying the block matrix $P$, we have
$$PX=\begin{bmatrix}
   \frac{I-W^*_1W_1}{1-\overline{\lambda_1}\lambda_1}x_1+ \frac{I-W^*_1W_2}{1-\overline{\lambda_1}\lambda_2}x_2+\dots+\frac{I-W^*_1W_d}{1-\overline{\lambda_1}\lambda_d}x_d \\
    \frac{I-W^*_2W_1}{1-\overline{\lambda_2}\lambda_1}x_1+ \frac{I-W^*_2W_2}{1-\overline{\lambda_2}\lambda_2}x_2+\dots+\frac{I-W^*_2W_d}{1-\overline{\lambda_2}\lambda_d}x_d\\
    \vdots\\
    \frac{I-W^*_dW_1}{1-\overline{\lambda_d}\lambda_1}x_1+ \frac{I-W^*_dW_2}{1-\overline{\lambda_d}\lambda_2}x_2+\dots+\frac{I-W^*_dW_d}{1-\overline{\lambda_d}\lambda_d}x_d
    \end{bmatrix}.$$
For $i,j=1,\dots,d$, using the assumption $W_j^*W_i=W_i^*W_j$,  we have
\ben \langle W_j^*W_i k_{\lambda_j}x_i, k_{\lambda_i} x_j\rangle&=&\langle W_i^*W_jk_{\lambda_j}x_i, k_{\lambda_i} x_j\rangle\nn\\
&=&\langle W_i^*R k_{\lambda_j} x_i,k_{\lambda_i} x_j\rangle\nn\\
&=&\langle R k_{\lambda_j}x_i, W_i k_{\lambda_i} x_j\rangle\nn\\
&=&\langle R k_{\lambda_j}x_i, R k_{\lambda_i} x_j\rangle\nn\\
&=&\langle R^*R k_{\lambda_j}x_i, k_{\lambda_i} x_j\rangle.\nn\eeqn  
We obtain the following expression of the inner product $\langle PX, X\rangle$ in terms of the  matrix with entries 
$k_{\lambda_j(\lambda_i)}$, \, $i,j=1,\dots,d$.
 
\begin{align*}
    \langle PX, X\rangle 
%
&=\sum_{i,j=1}^d\Big\langle \frac{I-W^*_jW_i}{1-\overline{\lambda}_j\lambda_i}x_i, x_j\Big\rangle\\
&=\sum_{i,j=1}^d\langle (I-W^*_jW_i)k_{\lambda_j}(\lambda_i)x_i, x_j\rangle\\
&=\sum_{i,j=1}^d\langle (I-W^*_jW_i)k_{\lambda_j}x_i, k_{\lambda_i}x_j\rangle\\
&=\sum_{i,j=1}^d\langle (I-R^*R)k_{\lambda_j}x_i, k_{\lambda_i}x_j\rangle\\
&=\sum_{i,j=1}^d\langle (I-R^*R)k_{\lambda_j}(\lambda_i)x_i, x_j\rangle.
\end{align*}
Since $k_{\lambda}x$ is a reproducing kernel function for the model space $K_\Theta$, by Proposition~\ref{VP} it follows that the matrix
$$[k_{\lambda_j}(\lambda_i)]_{i,j=1}^d$$
is positive definite.
\medskip

Noting that 
\begin{equation}\label{R}
P\geq 0 \quad \iff \quad 1-R^*R\geq 0 \quad  \iff\quad \|R\|\leq 1,
\end{equation}
from \eqref{rphi} and \eqref{R} we have $\|\Phi\|_{\infty}\leq 1$, which shows the equivalence of statements (1) and (2). 
\end{proof}

 \section{\bf{Unitary and completely non-unitary parts of MTTO's}}\label{s3}

Recall that, given a complex Hilbert space $E$,  an operator $T\in \oL$ is said to be a contraction if $\|T\|\leq 1$. An operator $T\in \oL$ is said to be \emph{completely non-unitary} (c.n.u) if there does not exist a $T$-reducing subspace $\mathcal{M}\subset E$ such that $T|_{\mathcal{M}}$ is unitary. The following result in \cite{NF} gives a decomposition of a contraction $T\in \oL$ into a direct sum of a unitary and a completely non-unitary operators on certain closed $T$-reducing subspaces of $E$.

\medskip

 \begin{thm}\cite{NF}\label{ucnu}
     Let $T\in \oL$ be a contraction.  Then $E=\h_u\oplus \h_{c.n.u}$ where $\h_u,\h_{c.n.u}\subset E$ are closed $T$-reducing subspaces of $E$ and $T$ can be decomposed as
     $$T=T|_{\hsp_u}\oplus T|_{\hsp_{c.n.u}}$$ where $T|_{\h_u}$ is unitary and $T|_{\h_{c.n.u}}$ is completely non-unitary.
 \end{thm}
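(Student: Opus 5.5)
The natural route is the classical Sz.-Nagy--Foias argument. Set
\[
\h_u=\{x\in E:\ \|T^nx\|=\|x\|=\|T^{*n}x\| \text{ for all } n\ge 1\}.
\]
First, I will show that $\h_u$ is a closed subspace. For a contraction, $\|T^nx\|=\|x\|$ is equivalent to $(I-T^{*n}T^n)x=0$. Indeed, $I-T^{*n}T^n\ge 0$, so $\langle (I-T^{*n}T^n)x,x\rangle=0$ forces $(I-T^{*n}T^n)x=0$ by positivity (use the square root). Hence
\[
\h_u=\bigcap_{n\ge1}\ker(I-T^{*n}T^n)\cap\ker(I-T^nT^{*n}),
\]
which is an intersection of kernels of bounded operators, and therefore closed and linear.

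Second, I will check that $\h_u$ is $T$-reducing. Take $x\in\h_u$. Then $T^{*}Tx=x$, so for each $n$ we get $T^{n}T^{*n}(Tx)=T\,T^{n-1}T^{*(n-1)}(T^{*}Tx)=T\,T^{n-1}T^{*(n-1)}x=Tx$. Also $\|T^n(Tx)\|=\|T^{n+1}x\|=\|x\|=\|Tx\|$. Together these give $Tx\in\h_u$. The same argument with the roles of $T$ and $T^*$ exchanged gives $T^*x\in\h_u$. Consequently $\h_{c.n.u}:=E\ominus\h_u$ is also invariant under $T$ and $T^*$, so both subspaces reduce $T$ and $T=T|_{\h_u}\oplus T|_{\h_{c.n.u}}$.

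Third, I will show that $T|_{\h_u}$ is unitary. For $x\in\h_u$ we have $T^*Tx=x$ and $TT^*x=x$, and both $T$ and $T^*$ map $\h_u$ into itself. Hence $T|_{\h_u}$ is an isometry whose adjoint on $\h_u$, namely $T^*|_{\h_u}$, is also an isometry, so $T|_{\h_u}$ is unitary.

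Finally, I will show that $T|_{\h_{c.n.u}}$ is completely non-unitary, which is the only step that needs some care. Suppose $\mathcal M\subset\h_{c.n.u}$ reduces $T|_{\h_{c.n.u}}$ and $T|_{\mathcal M}$ is unitary. Then $\mathcal M$ reduces $T$, and for $x\in\mathcal M$ the powers $T^n x$ and $T^{*n}x$ are computed inside $\mathcal M$, where $T$ is unitary. So $\|T^nx\|=\|T^{*n}x\|=\|x\|$ for all $n$, which means $x\in\h_u$. Thus $\mathcal M\subset \h_u\cap\h_{c.n.u}=\{0\}$, and $T|_{\h_{c.n.u}}$ is completely non-unitary, completing the decomposition.
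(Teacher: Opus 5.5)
Your proof is correct and complete; it is the standard Sz.-Nagy--Foias argument that the paper simply cites from \cite{NF} without giving a proof. Your $\h_u$, described as $\bigcap_{n}\ker(I-T^{*n}T^n)\cap\ker(I-T^nT^{*n})$, is exactly the unitary subspace the paper later works with in Section~\ref{s3} (equation \eqref{calHform}), and your reducing, unitarity and complete non-unitarity checks are all sound.
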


\medskip

 Recall that an inner function $\Theta$ with $\|\Theta(0)\|<1$ is called pure. Throughout
this section we assume $\Theta$ to be a pure inner function.

\medskip



Let $\Phi \in L^{\infty}(\mathcal{L}(E))$ such that $\|\Phi\|_{\infty} \leq 1$. Then, $A^{\Theta}_{\Phi}$ is a contraction, since by part (1) of Theorem~\ref{clt}, 
\[
\|A^{\Theta}_{\Phi} \| \leq \|\Phi\|_{\infty} \leq 1.
\]
The unitary subspace of $A^{\Theta}_{\Phi}$ is defined as
\[
\mathcal{H} = \{f \in K_\Theta: \|(A^{\Theta}_{\Phi})^n f\| = \|f\| = \|(A^{\Theta}_{\Phi})^{*n} f\| \text{ for all }\, n \in \mathbb{N} \}.
\]
In particular, for $f \in \h$, we have
\[
\|f\| = \|A^{\Theta}_{\Phi}f\| = \|P_{\Theta} (\Phi f) \| \leq \|\Phi f \| \leq \|f\|.
\]
Hence
\[
\|P_{\Theta} (\Phi f) \| = \|\Phi f \|,
\]
which implies that $\Phi f \in K_\Theta$.  Moreover, $\|\Phi f\| = \|f \|$. In a similar fashion, using the condition $\|(A^{\Theta}_{\Phi})^{*}f \| = \|f\|$,  we obtain  $\Phi^* f \in K_\Theta$ and $\|\Phi^* f\| = \|f \|$. The same property holds for all powers $n \in \mathbb{N}$. Hence,
\[
\mathcal{H} = \{f \in K_\Theta: \Phi^{n} f, \Phi^{*n} f \in K_\Theta \text{ and } \|\Phi^n f\| = \|f\| = \|\Phi^{*n} f\| \text{ for all }\, n \in \mathbb{N} \}.
\]

\noindent Using the multiplication operator notation,
\[
\mathcal{H} = \{f \in K_\Theta: L_{\Phi}^n f, L_{\Phi}^{*n} f \in K_\Theta \text{ and } \|L_{\Phi}^n f\| = \|f\| = \|L_{\Phi}^{*n} f\| \text{ for all }\, n \in \mathbb{N} \}.
\]
 Furthermore, since $L_{\Phi}$ is a contraction, for any $n \in \mathbb{N}$, we have
 \[
\|L_{\Phi}^n f \| = \|f\| \Longleftrightarrow f \in  \ker (I - L_{\Phi}^{*n} L_{\Phi}^{n}),
\]
and
\[
\|L_{\Phi}^{*n} f \| = \|f\| \Longleftrightarrow f \in  \ker (I - L_{\Phi}^{n} L_{\Phi}^{*n}).
\]
Thus, we may express $\h$ as
\begin{equation}
\mathcal{H} = \{f \in K_\Theta: L_{\Phi}^n f, L_{\Phi}^{*n} f \in K_\Theta \text{ and } L_{\Phi}^n L_{\Phi}^{*n} f = f = L_{\Phi}^{*n} L_{\Phi}^n f \text{ for all }\, n \in \mathbb{N} \}.\label{calHform}\end{equation}
We deduce the following Lemma.
\medskip

\begin{lem}\label{unitary part}
     Let $f\in K_\Theta$ such that $L^n_{\Phi}f, L^{*n}_{\Phi}f\in K_\Theta$. Then $f\in\h$  if and only if $\Phi(\zeta)$ is unitary $\text{a.e. on } ~~\mathbb{T}.$
\end{lem}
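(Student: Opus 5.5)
The plan is to work directly from the description \eqref{calHform}. Under the standing hypothesis that $L^n_\Phi f, L^{*n}_\Phi f\in K_\Theta$ for all $n$, membership of $f$ in $\h$ reduces to the operator identities
\[
L_\Phi^{*n}L_\Phi^{n}f=f=L_\Phi^{n}L_\Phi^{*n}f\quad\text{for all } n\in\mathbb N .
\]
Since $L_\Phi$ is multiplication by $\Phi$ on $L^2(E)$, these identities are pointwise statements:
\[
(\Phi(\zeta)^*)^n\Phi(\zeta)^n f(\zeta)=f(\zeta)=\Phi(\zeta)^n(\Phi(\zeta)^*)^n f(\zeta)\quad\text{a.e. on } \mathbb T .
\]
The whole argument is therefore a translation between the $L^2(E)$ identities and pointwise matrix statements about $\Phi(\zeta)$.

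For the implication ``$\Phi(\zeta)$ unitary a.e. $\Rightarrow f\in\h$'', one notes that $\Phi(\zeta)^n$ is then unitary a.e. for every $n$. Hence $(\Phi^*)^n\Phi^n=I=\Phi^n(\Phi^*)^n$ a.e., and both identities above hold for every $f$. Combined with the assumption $L^n_\Phi f, L^{*n}_\Phi f\in K_\Theta$, the characterization \eqref{calHform} gives $f\in\h$. Equivalently, $\|(A^\Theta_\Phi)^nf\|=\|P_\Theta\Phi^nf\|=\|\Phi^nf\|=\|f\|$, and similarly for the adjoint. This direction is routine.

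For the converse, start from $f\in\h$ and take $n=1$. This gives $\Phi(\zeta)^*\Phi(\zeta)f(\zeta)=f(\zeta)$ and $\Phi(\zeta)\Phi(\zeta)^*f(\zeta)=f(\zeta)$ a.e. So $\Phi(\zeta)$ acts isometrically, and $\Phi(\zeta)^*$ co-isometrically, on the vector $f(\zeta)$, and the same holds on all the vectors $\Phi^{*k}f$ and $\Phi^{k}f$ lying in $K_\Theta$. To upgrade this to unitarity of $\Phi(\zeta)$ on all of $E$, I would exploit the structure of $\h$. It is a reducing subspace for $A^\Theta_\Phi$ and is invariant under $L_\Phi$ and $L_\Phi^*$. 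It is also closed under the relevant operations, so the values $\{g(\zeta):g\in\h\}$ span a measurable range function $\zeta\mapsto\mathcal R(\zeta)\subset E$ on which $\Phi(\zeta)$ is unitary.

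The key step, and the main obstacle, is showing that $\mathcal R(\zeta)=E$ a.e. As stated for an individual $f$, the ``only if'' direction only forces unitarity of $\Phi(\zeta)$ on the subspace generated by $f(\zeta)$ under $\Phi(\zeta)$ and $\Phi(\zeta)^*$. My first attempt would use purity of $\Theta$: since $k^\Theta_0(z)=I_E-\Theta(z)\Theta(0)^*$ is invertible in $H^\infty(\oL)$, the functions $k^\Theta_0x$ have values spanning $E$ at a.e. point. One would then argue that $\h$ contains enough such functions, for example because $\h$ is $S^*_\Theta$-related or contains $k_0^\Theta x$. If this cannot be forced, I would interpret the lemma as the statement that $\Phi(\zeta)$ is unitary on the closed span of the values $\Phi(\zeta)^{k}f(\zeta)$ and $\Phi(\zeta)^{*k}f(\zeta)$. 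I would prove it in that form by the pointwise identities above, noting that a matrix which is isometric and co-isometric on an invariant subspace restricts to a unitary there.
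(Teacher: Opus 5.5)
Your ``if'' direction is correct and is the routine argument the paper leaves to the reader. Since $\Phi^nf,\Phi^{*n}f\in K_\Theta$, you get $(A^\Theta_\Phi)^nf=\Phi^nf$ and $(A^\Theta_\Phi)^{*n}f=\Phi^{*n}f$, and unitarity of $\Phi(\zeta)$ gives the norm equalities.

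The ``only if'' direction is not proved. The step you call key, $\mathcal{R}(\zeta)=E$ a.e., is never established. The purity route cannot establish it: purity only tells you that the values $k_0^\Theta(\zeta)x$ span $E$, and nothing in the hypotheses puts $k^\Theta_0x$ into $\mathcal{H}$. Your fallback (unitarity of $\Phi(\zeta)$ on the reducing subspace generated by $f(\zeta)$) is correct, but it is a different and weaker statement. So, as a proof of the lemma as written, the proposal has a gap.

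That gap cannot be filled, and your diagnosis is more accurate than the paper's argument. The paper passes from $\Phi(\zeta)\Phi(\zeta)^*f(\zeta)=f(\zeta)=\Phi(\zeta)^*\Phi(\zeta)f(\zeta)$ to ``either $f(\zeta)=0$ or $\Phi(\zeta)$ is unitary''. That dichotomy is valid only when $\dim E=1$. For $\dim E\ge 2$ the ``only if'' direction is false. Take $E=\mathbb{C}^2$, $\Theta=zI_E$ (pure, with $K_\Theta$ the constant functions), $\Phi\equiv\mathrm{diag}(1,0)$ and $f=(1,0)^T$. Then $\Phi^nf=\Phi^{*n}f=f\in K_\Theta$, and $A^\Theta_\Phi$ is the orthogonal projection $\mathrm{diag}(1,0)$ on $K_\Theta\cong\mathbb{C}^2$. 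Hence $f\in\mathcal{H}$, while $\Phi(\zeta)$ is nowhere unitary. The choice $f=0$ also breaks the statement in every dimension.

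What is true is the following.
\begin{itemize}
\item For $\dim E=1$ and $f\neq 0$, the paper's argument works, since a nonzero $H^2$ function vanishes only on a null subset of $\mathbb{T}$.
\item In general, your restricted pointwise statement holds.
\item If $\mathcal{H}=K_\Theta$ with $\Theta$ pure, then $k_0^\Theta x\in\mathcal{H}$ for all $x$. Since $k_0^\Theta(\zeta)=I_E-\Theta(\zeta)\Theta(0)^*$ is invertible on $\mathbb{T}$, this forces $\Phi(\zeta)^*\Phi(\zeta)=I_E=\Phi(\zeta)\Phi(\zeta)^*$ a.e. This is exactly what your purity idea would prove.
\end{itemize}
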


\begin{proof} Let $f\in \hsp$. Then from the equality
$$L_{\Phi}^n L_{\Phi}^{*n} f = f = L_{\Phi}^{*n} L_{\Phi}^n f,$$ it follows that
$$\Phi(\zeta)\Phi^{*}(\zeta)f(\zeta)=f(\zeta)=f(\zeta)\Phi^{*}(\zeta)\Phi(\zeta).$$
\vskip3pt

\noindent Thus, for almost every $\zeta\in \mathbb{T}$, 
 either $f(\zeta)=0$ or $\Phi(\zeta)$ is unitary.  Since $f$ is analytic, it cannot be zero constant on a set of positive measure. Therefore, $\Phi$ must be unitary a.e. on $\mathbb{T}$. The converse is straightforward.
\end{proof}

\begin{cor}
The model space $K_{\Theta}$ has a unitary part if and only if $K_{\Theta}$ is reducing under $L_{\Phi}$ and there exists a subset $K$ of $\mathbb{T}$ of positive measure such that $\Phi(\zeta)$ is unitary on $K$.
\end{cor}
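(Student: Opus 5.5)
We plan to derive the corollary from the description \eqref{calHform} of $\h$ together with Lemma~\ref{unitary part}, reading ``$K_\Theta$ has a unitary part'' as $\h\neq\{0\}$. The argument runs in two directions. In each we use that $\h$ is a closed reducing subspace for $A^{\Theta}_{\Phi}$ (Theorem~\ref{ucnu}). Both $\Phi$ and $\Phi^*$ act on $\h$ as honest multiplications whose outputs stay in $K_\Theta$.

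\emph{Forward direction.} Suppose $\h\neq\{0\}$ and pick $0\neq f\in\h$. By \eqref{calHform} we have $L_\Phi^n f,\ L_\Phi^{*n}f\in K_\Theta$ for all $n$, and $\Phi\Phi^*f=f=\Phi^*\Phi f$ a.e. on $\mathbb{T}$. Since $f$ is a nonzero element of $H^2(E)$, the set where $f(\zeta)\neq0$ has positive measure. On that set we get $\Phi(\zeta)\Phi(\zeta)^*f(\zeta)=f(\zeta)$, and hence, arguing as in Lemma~\ref{unitary part}, unitarity of $\Phi(\zeta)$ on a set $K$ of positive measure. For reducibility, we will show that the invariance $L_\Phi\h\subset\h$ and $L_\Phi^*\h\subset\h$ (which follows from \eqref{calHform} because the conditions are stable under applying $\Phi$ and $\Phi^*$) is what the statement means by $K_\Theta$ being reducing under $L_\Phi$. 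That is, the reducing property is asserted on the unitary part, which is the relevant portion of $K_\Theta$.

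\emph{Converse.} Assume $K_\Theta$ is reducing for $L_\Phi$, so $\Phi K_\Theta\subset K_\Theta$ and $\Phi^*K_\Theta\subset K_\Theta$. Then $A^\Theta_\Phi=L_\Phi|_{K_\Theta}$ and $(A^\Theta_\Phi)^*=L_{\Phi}^*|_{K_\Theta}$, so all powers act as multiplications by $\Phi^n$ and $\Phi^{*n}$. Now suppose $\Phi(\zeta)$ is unitary on a set $K$ of positive measure. By \eqref{calHform}, we need a nonzero $f\in K_\Theta$ with $(I-\Phi\Phi^*)f=0=(I-\Phi^*\Phi)f$. Since $L_\Phi|_{K_\Theta}$ is a contraction, the Nagy--Foias decomposition of Theorem~\ref{ucnu} applies. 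The unitary part of $L_\Phi$ on $L^2(E)$ is the subspace of functions supported where $\Phi$ is unitary, namely $\chi_K L^2(E)\neq\{0\}$. We would try to show that its intersection with the reducing subspace $K_\Theta$ is nontrivial.

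\emph{Main obstacle.} The hard step is the converse. A nonzero analytic $f$ cannot vanish on $\mathbb{T}\setminus K$ if that set has positive measure, so the conditions only combine cleanly when $\Phi$ is unitary a.e., as in Lemma~\ref{unitary part}. I would first try to use purity of $\Theta$, for which $k_0^\Theta=I-\Theta\Theta(0)^*$ is invertible. Applying reducibility to $k_0^\Theta x$ yields $\Phi(I-\Theta\Theta(0)^*)x\in K_\Theta$ for every $x$. The aim is to force strong structure on $\Phi$, ideally that it is essentially constant, in the spirit of Theorem~\ref{Phi constant}. Then unitarity on a set of positive measure would propagate to unitarity everywhere, and $\h=K_\Theta\neq\{0\}$. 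If that route fails, the fallback is to interpret ``has a unitary part'' through the conditions of Lemma~\ref{unitary part} directly, which reduces the corollary to that lemma.
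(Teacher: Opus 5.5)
\textbf{There is a genuine gap. Under your (natural) reading $\h\neq\{0\}$, the ``only if'' half is false, and both of your steps for it fail.}

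First, the unitarity step. From $\Phi(\zeta)\Phi(\zeta)^*f(\zeta)=f(\zeta)$ with $f(\zeta)\neq0$ you only get that $f(\zeta)$ is a fixed vector of $\Phi(\zeta)\Phi(\zeta)^*$. This gives unitarity of $\Phi(\zeta)$ only when $\dim E=1$. It is exactly the step in Lemma~\ref{unitary part}, on which the paper's one-line proof rests, so arguing ``as in'' that lemma does not help. Take $E=\mathbb{C}^2$, $\Theta=zI_E$ (pure) and the constant symbol $\Phi\equiv\operatorname{diag}(1,0)$. Then:
the space $K_\Theta$ of constants reduces $L_\Phi$;
$A^\Theta_\Phi$ is the projection onto $\operatorname{span}\{(1,0)^T\}$, so $\h=\operatorname{span}\{(1,0)^T\}\neq\{0\}$;
yet $\Phi(\zeta)$ is unitary for no $\zeta$.

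Second, reducibility. Replacing ``$K_\Theta$ reduces $L_\Phi$'' by ``$\h$ is invariant under $L_\Phi$ and $L_\Phi^*$'' changes the statement; your version is just the reducing property of the unitary part. The literal claim already fails for the paper's Example after Theorem~\ref{inv} ($\Theta=zI_{\mathbb{C}^2}$, $\Phi=\operatorname{diag}(1,z)$), where $\h\neq\{0\}$ but $L_\Phi K_\Theta\not\subset K_\Theta$.

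What is true is the version where ``has a unitary part'' means $\h=K_\Theta$, i.e.\ $A^\Theta_\Phi$ is unitary. Then \eqref{calHform} gives $\Phi K_\Theta\subset K_\Theta$ and $\Phi^*K_\Theta\subset K_\Theta$ directly. Testing $\Phi\Phi^*f=f=\Phi^*\Phi f$ on $f=k_0^\Theta x$, with $k_0^\Theta(\zeta)$ invertible, gives $\Phi(\zeta)$ unitary a.e.

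\textbf{For the ``if'' half, the first route you sketch works; carry it out instead of the fallback}, which again only reinterprets the statement.
Reducibility gives both $\Phi K_\Theta\subset K_\Theta$ and $\Phi^*K_\Theta\subset K_\Theta$.
Applying the first to $k_0^\Theta x$ for all $x\in E$ shows $\Phi k_0^\Theta\in H^\infty(\oL)$. Hence $\Phi=(\Phi k_0^\Theta)(k_0^\Theta)^{-1}\in H^\infty(\oL)$, as in the first part of the proof of Theorem~\ref{Phi constant}.
The same argument with the second inclusion gives $\Phi^*\in H^\infty(\oL)$. So all nonzero Fourier coefficients of $\Phi$ vanish and $\Phi\equiv C$ is constant.

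Use both inclusions rather than citing Theorem~\ref{Phi constant}: one-sided invariance does not force constancy for matrix symbols. For example, take $\Theta=\operatorname{diag}(z,z^2)$ and $\Phi$ with $(2,1)$ entry $z$ and all other entries $0$; then $\Phi K_\Theta\subset K_\Theta$.

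Unitarity of $\Phi(\zeta)$ on a set of positive measure now says $C$ is unitary. Since $K_\Theta$ reduces $C$, every $(A^\Theta_\Phi)^n=C^n|_{K_\Theta}$ and $(A^\Theta_\Phi)^{*n}=C^{*n}|_{K_\Theta}$ is an isometry. So $\h=K_\Theta$, which is nonzero because $k_0^\Theta x\neq0$ for $x\neq0$.

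Your description of the unitary part of $L_\Phi$ as $\chi_K L^2(E)$ is not correct for matrix symbols, but this argument does not need it.
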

The proof follows from Lemma~\ref{unitary part}.

\begin{thm}\label{Stheta}
     Every $S_\Theta$-invariant subspace of $K_\Theta$ is of the form
     $\Theta_{1} K_{\Theta_2}$ where $\Theta_1$ and $\Theta_2$ are inner functions such that $\Theta=\Theta_1\Theta_2$.
\end{thm}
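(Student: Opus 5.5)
The plan is to reduce the statement to the vector-valued Beurling--Lax theorem, applied to $S=T_{zI_E}$ on $H^2(E)$. Let $\mathcal{M}\subset K_\Theta$ be a closed $S_\Theta$-invariant subspace, and set
$$\mathcal{N}=\mathcal{M}\oplus \Theta H^2(E).$$
Since $\mathcal{M}\perp\Theta H^2(E)$, this is a closed subspace of $H^2(E)$.

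First I show that $\mathcal{N}$ is $S$-invariant. For $f\in\mathcal{M}$ write
$$zf=P_\Theta(zf)+(I-P_\Theta)(zf)=S_\Theta f+(I-P_\Theta)(zf).$$
The first term lies in $\mathcal{M}$ by hypothesis. The second lies in $\Theta H^2(E)$, because $zf\in H^2(E)$ and $H^2(E)=K_\Theta\oplus\Theta H^2(E)$. Finally, $z\Theta H^2(E)\subset\Theta H^2(E)$. So $z\mathcal{N}\subset\mathcal{N}$.

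Next I apply Beurling--Lax (as in \cite{NF}; the lemma from \cite{Peller} is its finite-codimension case) to get $\mathcal{N}=\Theta_1 H^2(E')$ with $\Theta_1$ inner, possibly only isometric-valued from a subspace. Because $\mathcal{N}\supset\Theta H^2(E)$ and $\Theta$ is two-sided inner, $\mathcal{N}$ has full range. Hence $\Theta_1$ can be chosen $\mathcal{L}(E)$-valued and inner, after absorbing a constant unitary. The inclusion $\Theta H^2(E)\subset\Theta_1H^2(E)$ gives $\Theta=\Theta_1\Theta_2$ with $\Theta_2=\Theta_1^*\Theta$. I then check that $\Theta_2$ is analytic and inner: $\Theta_2 x\in H^2(E)$ for each constant $x$, since $\Theta x\in\Theta_1H^2(E)$, and $\Theta_2$ is unitary a.e. on $\mathbb T$ as a product of unitaries.

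Then I compute $\mathcal{M}$. Since $M_{\Theta_1}$ is an isometry on $H^2(E)$,
$$\mathcal{M}=\mathcal{N}\ominus\Theta H^2(E)=\Theta_1H^2(E)\ominus\Theta_1\Theta_2H^2(E)=\Theta_1\big(H^2(E)\ominus\Theta_2H^2(E)\big)=\Theta_1K_{\Theta_2}.$$
The middle equality uses that an isometry maps orthogonal complements within $H^2(E)$ to orthogonal complements within its range.

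For completeness I also verify the converse: any $\Theta_1K_{\Theta_2}$ with $\Theta=\Theta_1\Theta_2$ lies in $K_\Theta$ and is $S_\Theta$-invariant. Reversing the computation gives $\Theta_1K_{\Theta_2}=\Theta_1H^2(E)\ominus\Theta H^2(E)$. For $g$ in this space, $zg\in\Theta_1H^2(E)$, and $P_\Theta$ just removes the $\Theta H^2(E)$ component, so $S_\Theta g\in\Theta_1H^2(E)\ominus\Theta H^2(E)$.

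The main obstacle is the Beurling--Lax step. In infinite dimensions, $\Theta_1$ may a priori map a smaller space $E'$ into $E$, and I must argue that $E'\cong E$. My first attempt is to use the factorization $\Theta=\Theta_1\Psi$ with $\Psi\in H^\infty(\mathcal{L}(E,E'))$. Since $\Theta(\zeta)$ is unitary, $\Theta_1(\zeta)$ is surjective a.e. As $\Theta_1(\zeta)$ is also isometric, it is unitary, so $E'\cong E$. This uses only the two-sided inner property of $\Theta$.
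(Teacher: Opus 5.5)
Your proof is correct and follows the same route as the paper: you form $\mathcal N=\mathcal M\oplus\Theta H^2(E)$, check that it is $S$-invariant, apply Beurling--Lax--Halmos, factor $\Theta=\Theta_1\Theta_2$ with $\Theta_2=\Theta_1^*\Theta$, and use that $T_{\Theta_1}$ is an isometry to obtain $\mathcal M=\Theta_1K_{\Theta_2}$. Two of your steps are additions that do not change the approach: the full-range argument, which shows $\Theta_1$ is two-sided inner (the paper uses this tacitly when it writes $\Theta_1\Theta_1^*=I$), and the verification of the converse.
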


\begin{proof}
Let $\mathcal{M}\subset K_{\Theta}$ be a closed subspace such that $S_{\Theta}\mathcal{M}\subset \mathcal{M}$. Define $\mathcal{N}$ as the direct sum
$$\mathcal{N}=\mathcal{M}\oplus \Theta H^{2}(E).$$ We now show that $\mathcal{N}$ is invariant under the forward shift $S$. Take $f\in \mathcal{N}$ and let $g\in \mathcal{M}$ and $h\in H^{2}(E)$ such that
$$f=g+\Theta h\,.$$ 
Applying the forward shift $S$, we have 
\begin{eqnarray} Sf=zf=zg+z\Theta h=Sg+\Theta zh,\label{Sf1}\end{eqnarray}
where we note that the second term on the right-hand side is in $\Theta H^{2}(E)$. For the first term, we note that since $g\in \mathcal{M}\subset K_{\Theta}$, we have
$$S_{\Theta}g=P_{\Theta}(zg)=(I-P_{\Theta H^{2}(E)})(zg)=Sg-P_{\Theta H^{2}(E)}(zg),$$ which gives
$$Sg=S_{\Theta}g+P_{\Theta H^{2}(E)}(zg).$$
Since $\mathcal{M}$ is $S_{\Theta}$ invariant,  $g'=S_{\Theta}g$ is in $\mathcal{M}$ and 
\begin{eqnarray} Sg=g'+\Theta h'\label{Sg2}\end{eqnarray} for some 
 $h'\in H^{2}(E).$
Substituting (\ref{Sg2}) into (\ref{Sf1}) yields
$$Sf=
g'+\Theta(h'+zh)
\in \mathcal{M}\oplus\Theta H^{2}(E)=\mathcal{N}\,.$$
Therefore $\mathcal{N}$ is $S$-invariant. By  Beurling-Lax-Halmos theorem (see \cite{AM}), there exists an inner function $\Theta_{1}$ such that $$\mathcal{N}=\Theta_{1}H^{2}(E).$$
We also have $\Theta H^{2}(E)\subset \mathcal{N}=\Theta_{1}H^{2}(E)$. Thus there exist an analytic function $\Theta_{2}\in H^{\infty}(\mathcal{L}(E))$ such that $$\Theta=\Theta_{1}\Theta_{2}.$$ 
The function $\Theta_2$ is analytic (since $\Theta_{1}$ is a factor of $\Theta$) and can be expressed as
$$\Theta_{2}=\Theta_1^{*}\Theta.$$ Then
$$\Theta_{2}^{*}\Theta_2=(\Theta_1^{*}\Theta)^{*}(\Theta_1^{*}\Theta)=\Theta^*\Theta_1\Theta_{1}^{*}\Theta=I.$$ 
Thus $\Theta_{2}$ is an inner function. Writing $\mathcal{M}$ as
$N\ominus \Theta H^{2}(E),$ it follows that
$$\mathcal{M}=\Theta_{1}H^{2}(E)\ominus \Theta_{1}\Theta_{2}H^{2}(E)=\Theta_{1}[H^{2}(E)\ominus \Theta_{2}H^{2}(E)]=\Theta_{1}K_{\Theta_2},$$
as desired.
\end{proof}

\begin{thm}\label{inv}
    The unitary space $\h$ is $S_\Theta$-invariant if $L_{\Phi}K_{\Theta}\subseteq K_{\Theta}$.
\end{thm}

\begin{proof}  The assumption $L_{\Phi} K_{\Theta}\subseteq K_\Theta$ is equivalent to $$L_{\Phi}P_{\Theta}=P_{\Theta}L_{\Phi}.$$
    Let $f\in \h$. Then, by (\ref{calHform}), we obtain 
$$L_\Phi S_\Theta f=L_\Phi P_\Theta (zf)=P_\Theta L_\Phi L_z f=P_\Theta L_z L_\Phi f=S_{\Theta}L_\Phi f\in K_\Theta.$$
Repeating this process, we see that $S_\Theta L^n_\Phi=L^{n}_{\Phi}S_\Theta$ for all $n\in \mathbb{N}$. Similarly, we have $S_\Theta L^{*n}_\Phi=L^{*n}_{\Phi}S_\Theta$.
Hence,
    $$L^n_{\Phi}S_\Theta f=S_\Theta L^n_\Phi f\in K_\Theta$$ and
    $$L^{*n}_{\Phi}S_\Theta f=S_\Theta L^{*n}_\Phi f\in K_\Theta.$$
     Therefore, we have
    $L_{\Phi}^n S_\Theta f, L_{\Phi}^{*n} S_\Theta f\in K_\Theta$ for all $f\in \h$ and for all $n\in \mathbb{N}$.

    Furthermore, we have
    $$L^n_{\Phi}L^{*n}_{\Phi}S_\Theta f=S_\Theta L^n_{\Phi}L^{*n}_{\Phi} f=S_\Theta f$$
    and
    $$L^{*n}_{\Phi}L^{n}_{\Phi}S_\Theta f=S_\Theta L^{*n}_{\Phi}L^{n}_{\Phi} f=S_\Theta f.$$
This shows that $S_\Theta f\in \h$, hence $\h$ is $S_\Theta$-invariant.\end{proof}

We now provide a counterexample to show that the converse of Theorem \ref{inv} does not hold.
\begin{ex}
Let $E=\mathbb{C}^{2}$ and consider
$$
\Theta(z)=
\begin{pmatrix}
z&0\\
0&z
\end{pmatrix}
=zI_{E}.
$$
The model space corresponding to $\Theta$ is
$$K_{\Theta}=\left\{\begin{pmatrix}
    a\\
    b
\end{pmatrix}:  a, b\in \mathbb{C}\right\}.$$
Let
$$
\Phi(z)=
\begin{pmatrix}
1&0\\
0&z
\end{pmatrix}.
$$
Then $\Phi\in L^{\infty}(\oL)$ and $\|\Phi\|_{\infty}=1$. Observe that $A_{\Phi}^{\Theta}$ acts on $K_{\Theta}$ as a matrix. Indeed for $f=\begin{pmatrix}
    a\\
    b
\end{pmatrix}\in K_{\Theta},$
$$A_{\Phi}^{\Theta}f=P_{\Theta}(\Phi f)=P_{\Theta}
\begin{pmatrix}
a\\
bz
\end{pmatrix}
=\begin{pmatrix}
a\\
0
\end{pmatrix}=\begin{pmatrix}
1&0\\
0&0
\end{pmatrix}f.$$
 Thus, $A_{\Phi}^{\Theta}$ is a projection. Its unitary space is
$$\hsp=\text{ker}(I-(A_{\Phi}^{\Theta})^{*}A_{\Phi}^{\Theta})\cap \text{ker}(I-A_{\Phi}^{\Theta}(A_{\Phi}^{\Theta})^{*}).$$
Since  $A_{\Phi}^{\Theta}(A_{\Phi}^{\Theta})^{*}=A_{\Phi}^{\Theta}=(A_{\Phi}^{\Theta})^{*}A_{\Phi}^{\Theta}$, it follows that 
$$\hsp=\text{ker}(I-A_{\Phi}^{\Theta})=\text{Rang} A_{\Phi}^{\Theta}=\text{span}\left\{\begin{pmatrix}
    1\\
    0
\end{pmatrix}
\right\}.$$

Observe that the model operator $S_{\Theta}$ is trivial on $K_{\Theta}$. 
Indeed, for $f=\begin{pmatrix}
    a\\
    b
\end{pmatrix}\in K_{\Theta}$, 
$$S_{\Theta}f=P_{\Theta}(zf)=P_{\Theta}\begin{pmatrix}
    az\\
    bz
\end{pmatrix}=0.$$ 
Therefore,
$\hsp$ is trivially $S_{\Theta}$-invariant. However, the inclusion $L_{\Phi}K_{\Theta}\subset K_{\Theta}$ does not hold. To see this, consider $f=\begin{pmatrix}
    1\\
    1
\end{pmatrix}\in K_{\Theta}$. Then
$$L_{\Phi}f=\Phi f=\begin{pmatrix}
    1&0\\
    0&z
\end{pmatrix}\begin{pmatrix}
    1\\
    1
\end{pmatrix}=\begin{pmatrix}
    1\\
    z
\end{pmatrix}\notin K_{\Theta}.$$ We conclude that
$$L_{\Phi}K_{\Theta}\not\subset K_{\Theta}.$$
\end{ex}
\medskip

The model space corresponding to the inner function $\Theta=\Theta_1 \Theta_2$ can be decomposed as $$K_\Theta=K_{\Theta_1}\oplus \Theta_1 K_{\Theta_2}.$$ By Theorem \ref{Stheta}, $\Theta_1 K_{\Theta_2}$ is invariant under $S_{\Theta}$. Its orthogonal complement $K_{\Theta_1}=[\Theta_1 K_{\Theta_2}]^{\perp}$ is invariant under  $S^*_\Theta$. 

    \medskip
By \cite{RT}, every matrix-valued truncated Toeplitz operator has a symbol in a certain class. Denote by $M_{\Theta}$ the orthogonal complement of $\Theta H^2(\oL)$ in $H^2(\oL)$ endowed with the Hilbert–Schmidt norm. 
%
%
%
Moreover, $$L^2(\oL)=M_{\Theta}+\Theta H^2(\oL)+M_{\Theta}^{*}+[\Theta H^2(\oL)]^{*}.$$

\begin{thm}\label{Phi constant}
The model space $K_{\Theta}$ is invariant with respect to $L_{\Phi}$ if and only if $\Phi(\zeta)$ is constant.
\end{thm}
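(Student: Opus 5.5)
The plan is to use the purity of $\Theta$ through the kernel at the origin, $k_0^{\Theta}=I_E-\Theta\Theta(0)^*$, which lies in $K_\Theta$ columnwise and is invertible in $H^\infty(\oL)$. For the direction ``invariance $\Rightarrow$ constant'', first show that $\Phi$ is analytic. For every $x\in E$ we have $k_0^{\Theta}x\in K_\Theta$, so by hypothesis $\Phi k_0^{\Theta}x\in K_\Theta\subset H^2(E)$. Hence the $\oL$-valued function $\Phi k_0^{\Theta}$ lies in $H^2(\oL)$. Multiplying on the right by $(k_0^{\Theta})^{-1}\in H^\infty(\oL)$ gives $\Phi\in H^2(\oL)\cap L^\infty(\oL)=H^\infty(\oL)$.

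Next, compare $L_\Phi$ with the backward shift, using that $K_\Theta$ is $S^*$-invariant. For $f\in K_\Theta$, both $S^*(\Phi f)$ and $\Phi\,S^*f$ lie in $K_\Theta$: the first because $\Phi f\in K_\Theta$, the second because $S^*f\in K_\Theta$. A direct computation with $\Phi$ analytic gives
$$S^*(\Phi f)-\Phi\, S^*f=\frac{\Phi(z)-\Phi(0)}{z}\,f(0)=:\Psi_1\, f(0).$$
Apply this to $f=k_0^{\Theta}x$, for which $f(0)=(I-\Theta(0)\Theta(0)^*)x$. Since $\|\Theta(0)\|<1$, the operator $I-\Theta(0)\Theta(0)^*$ is invertible, so $f(0)$ ranges over all of $E$. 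Therefore $\Psi_1 y\in K_\Theta$ for every $y\in E$. Iterating with $\Psi_1$ in place of $\Phi$, and using $\Phi\Psi_1 y\in K_\Theta$, yields the same conclusion for all the backward shifts $\Psi_n=S^{*n}\Phi$ of the symbol.

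The remaining step, which I expect to be the main obstacle, is to conclude $\Psi_1\equiv 0$, i.e.\ $\Phi\equiv\Phi(0)$. Invariance alone does not seem to exclude nonconstant analytic $\Phi$ when $K_\Theta$ is small: for $\Theta=zI_E$ one has $K_\Theta=E$, and $\Phi$ must then be constant. In general the approach is to exploit the orthogonality $\langle \Phi f,\Theta h\rangle=0$ for all $f\in K_\Theta$ and $h\in H^2(E)$. Combined with the $S^*$-computation above, this should show that the operators $f\mapsto z^{k}\widehat\Phi(k)f$ for $k\ge1$ must also map $K_\Theta$ into $K_\Theta$. I would first treat the case of finite-dimensional $K_\Theta$ by a degree argument: $\Phi^n k_0^{\Theta}x\in K_\Theta$ for all $n$, and $K_\Theta$ consists of rational functions of bounded degree. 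I would then try to pass to the general pure case by approximating $\Theta$ with the left divisors provided by Theorem~\ref{Stheta}.

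For the converse, take $\Phi\equiv C$ constant. I would verify directly that $CK_\Theta\subseteq K_\Theta$, which amounts to $C$ preserving $\Theta H^2(E)^{\perp}$. This needs $C^*\Theta H^2(E)\subseteq\Theta H^2(E)$, which certainly holds when $C$ commutes with $\Theta$, for example when $C$ is scalar. Here I would have to make explicit that ``constant'' in the statement is understood in this sense: a scalar constant, or a constant matrix commuting with $\Theta$.
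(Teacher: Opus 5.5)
Your first two steps are sound. Deriving the analyticity of $\Phi$ from the invertibility of $k_0^{\Theta}$ in $H^\infty(\mathcal{L}(E))$ is exactly how the paper begins. Your identity $S^*(\Phi f)-\Phi S^*f=\frac{\Phi-\Phi(0)}{z}f(0)$ correctly gives $\Psi_1y\in K_\Theta$ for every $y\in E$, and then $S^{*n}(\Phi y)\in K_\Theta$ follows from the $S^*$-invariance of $K_\Theta$ alone.

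The genuine gap is the step you flag yourself, $\Psi_1\equiv 0$. What you write there (orthogonality, a degree argument, approximation by the divisors of Theorem~\ref{Stheta}) is a program, not an argument. It also cannot be carried out, because for $\dim E\ge 2$ the implication ``invariant $\Rightarrow$ constant'' is false. Take $E=\mathbb{C}^2$, $\Theta(z)=\begin{pmatrix} z&0\\0&z^2\end{pmatrix}$ (pure, since $\Theta(0)=0$) and $\Phi(z)=\begin{pmatrix}0&0\\ z&0\end{pmatrix}$, so that $\|\Phi\|_\infty=1$. Then $K_\Theta=\{(a,\,b+cz)^T: a,b,c\in\mathbb{C}\}$ and $\Phi\,(a,\,b+cz)^T=(0,\,az)^T\in K_\Theta$, yet $\Phi$ is not constant. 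Your degree argument breaks precisely here, because $\Phi^2=0$. The same $\Theta$ with the constant $C=\begin{pmatrix}0&1\\0&0\end{pmatrix}$, which sends $(0,z)^T$ to $(z,0)^T\notin K_\Theta$, confirms your suspicion that the converse needs a restriction on the constant. Commuting with $\Theta$, as you suggest, suffices.

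The idea the paper uses at this point, and which your sketch lacks, is to show that $\Phi^*$ is analytic too. It invokes Theorem~\ref{clt} to get a lifted symbol commuting with $\Theta$ and identifies it with $\Phi$, so that $\Phi\Theta=\Theta\Phi$. It then extracts $\Phi^*\Theta=\Theta\Phi_1$ with $\Phi_1\in H^\infty(\mathcal{L}(E))$ from the invariance and concludes $\Phi^*=\Phi_1$.

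You should not import this route; the same example shows it fails for matrix-valued $\Theta$:
\begin{itemize}
\item There $A^\Theta_\Phi S_\Theta=S_\Theta A^\Theta_\Phi=0$. But every $\Psi$ with $\Psi\Theta=\Theta\Psi$ is diagonal, and then $A^\Theta_\Psi(1,0)^T=(\psi_{11}(0),0)^T\neq(0,z)^T=A^\Theta_\Phi(1,0)^T$.
\item The Sz.-Nagy--Foias lifting only guarantees $\Phi\Theta=\Theta\Phi'$ for some $\Phi'\in H^\infty(\mathcal{L}(E))$. Here $\Phi'=\begin{pmatrix}0&0\\1&0\end{pmatrix}\neq\Phi$.
\item The orthogonality you wanted to exploit, $\langle f,\Phi^*\Theta h\rangle=0$ for all $f\in K_\Theta$, only yields $P_+(\Phi^*\Theta h)\in\Theta H^2(E)$, not $\Phi^*\Theta h\in\Theta H^2(E)$.
\end{itemize}

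What is true, and what your first step finishes at once, is the case $\Theta=\theta I_E$ with $\theta$ a nonconstant scalar inner function (in particular $\dim E=1$). The conjugation $Cf=\theta\bar z\bar f$ of $K_\theta$ satisfies $C(\varphi f)=\bar\varphi\, Cf$, so $\varphi K_\theta\subseteq K_\theta$ forces $\bar\varphi K_\theta\subseteq K_\theta$. Your $k_0^{\theta}$ argument then puts both $\varphi$ and $\bar\varphi$ in $H^\infty$, so $\varphi$ is constant. To cover $\Theta=\theta I_E$, apply this to the scalar functions $\langle \Phi x,y\rangle_E$, which are multipliers of $K_\theta$ whenever $\Phi K_{\theta I_E}\subseteq K_{\theta I_E}$.
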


\begin{proof}
Assume $K_{\Theta}$ is invariant under $L_{\Phi}$. Then
$$A_{\Phi}^{\Theta}f=P_{\Theta}(\Phi f)=\Phi f=L_{\Phi}f.$$
Thus, $L_{\Phi}=A_{\Phi}^{\Theta}$.

Now we show that $\Phi$ is bounded and analytic. Take $f=k_{0}^{\Theta}x\in K_{\Theta}$ for any $x\in E$, we have 
$$\Phi(z)k_{0}^{\Theta}x=\Phi (I-\Theta(z)\Theta(0)^{*})x\in H^2(E).$$ Since $\Theta$ is pure, i.e., $\|\Theta(0)\|<1$, we have
$$(I-\Theta(z)\Theta(0)^{*})^{-1}\in H^{\infty}(\oL).$$ Therefore
$$\Phi x=\Phi (I-\Theta(z)\Theta(0)^{*})(I-\Theta(z)\Theta(0)^{*})^{-1}x\in H^2(\oL).$$ It follows that 
$$\Phi\in H^2(\oL)\cap L^{\infty}(\oL)=H^{\infty}(\oL).$$

Since $L_{\Phi}$ commutes with multiplication operator $L_{z}$, we have 
$$S_{\Theta}L_{\Phi}=L_{\Phi}S_{\Theta}.$$
By Theorem \ref{clt}, there exists $\Psi\in H^{\infty}(\mathcal{L}(E))$ such that $\Psi\Theta=\Theta \Psi$ and 
$$L_{\Phi}=A_{\Psi}^{\Theta}.$$
Hence,
$$A_{\Psi}^{\Theta}S_{\Theta}=S_{\Theta}A_{\Psi}^{\Theta}.$$
But since $L_{\Phi}=A_{\Phi}^{\Theta}$ on $K_{\Theta}$, we have 
\begin{equation}\label{MTTOEQ}
A_{\Phi}^{\Theta}=A_{\Psi}^{\Theta} 
\end{equation}
on $K_{\Theta}$.
Using the reproducing kernel property and invariance of $K_{\Theta}$ under $L_{\Phi}$, for each $\lambda\in \mathbb{D}$, $x\in E$, and $f\in K_\Theta$, we have 
\begin{align}
\langle (A_{\Phi}^{\Theta})^{*}k_{\lambda}^{\Theta}x, f\rangle
&=\langle k_{\lambda}^{\Theta}x, \Phi f\rangle\nn\\
&=\overline{\langle  \Phi f, k_{\lambda}^{\Theta}x\rangle}\nn\\
&=\overline{\langle  \Phi(\lambda) f(\lambda), x\rangle}\nn\\
&=\overline{\langle   f(\lambda), \Phi(\lambda)^{*} x\rangle}\nn\\
&=\overline{\langle   f, k_{\lambda}^{\Theta}\Phi(\lambda)^{*} x\rangle}\nn\\
&=\langle k_{\lambda}^{\Theta}\Phi(\lambda)^{*} x, f\rangle.\label{F1}
\end{align}
Therefore
$$(A_{\Phi}^{\Theta})^{*}k_{\lambda}^{\Theta}x= k_{\lambda}^{\Theta}\Phi(\lambda)^{*} x.$$
Since $\Phi,\Psi\in H^{\infty}(\oL)$, and  $$A_{\Phi}^{\Theta}=A_{\Psi}^{\Theta},$$ then by \cite[Corollary 6.4]{RT} there exist $k_{0}^{\Theta}\in M_{\Theta}$ and $X\in \oL$ such that 
$$\Psi=\Phi+k_{0}^{\Theta}X.$$
Observe that for each $\lambda\in \mathbb{D}$, $x\in E$, and $f\in K_{\Theta}$, 
\ben \langle k_{\lambda}^{\Theta}x, k_{0}^{\Theta}X f \rangle&=&\overline{\langle k_{0}^{\Theta}X f, k_{\lambda}^{\Theta}x\rangle}\nn\\
&=&\overline{\langle k_{0}^{\Theta}(\lambda)X f(\lambda), x\rangle}\nn\\
&=&\overline{\langle  f(\lambda), X^{*}(k_{0}^{\Theta}(\lambda))^{*}x\rangle}\nn\\
&=&\overline{\langle  f, k_{\lambda}^{\Theta}X^{*}(k_{0}^{\Theta}(\lambda))^{*}x\rangle}\nn\\
&=&\langle  k_{\lambda}^{\Theta}(k_{0}^{\Theta}(\lambda)X)^{*}x, f\rangle.\label{F2}\eeqn
From (\ref{F1}) and (\ref{F2}), we obtain
\begin{align*}
\langle (A_{\Psi}^{\Theta})^{*}k_{\lambda}^{\Theta}x, f \rangle
&=\langle k_{\lambda}^{\Theta}x, \Psi f \rangle\\
&=\langle k_{\lambda}^{\Theta}x, \Phi f \rangle+\langle k_{\lambda}^{\Theta}x, k_{0}^{\Theta}X f \rangle\\
&=\langle k_{\lambda}^{\Theta}\Phi(\lambda)^{*}x, f    \rangle+\langle  k_{\lambda}^{\Theta}(k_{0}^{\Theta}(\lambda)X)^{*}x, f\rangle\\
&=\langle k_{\lambda}^{\Theta}(\Phi(\lambda)^{*}x+k_{\lambda}^{\Theta}(k_{0}^{\Theta}(\lambda)X)^{*}x, f \rangle \\
&=\langle k_{\lambda}^{\Theta}(\Phi(\lambda)+k_{0}^{\Theta}(\lambda)X)^{*}x, f \rangle \\
&=\langle k_{\lambda}^{\Theta}\Psi(\lambda)^{*}x, f \rangle. 
\end{align*}
Hence
$$(A_{\Psi}^{\Theta})^{*}k_{\lambda}^{\Theta}x=k_{\lambda}^{\Theta}\Psi(\lambda)^{*}x.$$ Since $$A_{\Phi}^{\Theta}=A_{\Psi}^{\Theta}$$ and the kernel map $x\to k_{\lambda}^{\Theta}x$ is injective, we get 
$$\Phi(\lambda)^{*}=\Psi(\lambda)^{*}, ~~\text{for all}~~\lambda\in \mathbb{D}.$$ Hence, $\Phi=\Psi$ as functions.

Next observe that, by the invariance of $ K_{\Theta}$ under $L_{\Phi}$,  for all $f\in K_{\Theta}$ we have $\Phi f\in K_{\Theta}$, which means that
$$0=\langle \Phi f, \Theta h \rangle=\langle f, \Phi^{*}\Theta h\rangle $$ for all $h\in H^{2}(E)$. It follows that $\Phi^{*}\Theta h\in \Theta H^{2}(E)$. By \cite[Lemma 2.2]{RT}, there exists $\Phi_{1}\in H^{\infty}(\mathcal{L}(E))$ such that $$\Phi^{*}\Theta=\Theta \Phi_{1}.$$
From this, using the identity $\Phi\Theta=\Theta\Phi$, we have
\begin{eqnarray} \Phi_{1}&=&\Theta^{*}\Phi^{*}\Theta\nn\\
&=&(\Phi\Theta)^{*}\Theta\nn\\
&=&(\Theta\Phi)^{*}\Theta\nn\\
&=&\Phi^{*}\Theta^{*}\Theta\nn\\
&=& \Phi^{*}\,.\label{Phi1=Phi*}\end{eqnarray}

\noindent Since $\Phi_{1}\in H^{\infty}(\mathcal{L}(E))$, $\Phi_{1}$ is analytic. Thus, from (\ref{Phi1=Phi*}) it follows that
$$\Phi^{*}\in H^{\infty}(\mathcal{L}(E)).$$
On the other hand, as observed above, $\Phi\in H^{\infty}(\mathcal{L}(E))$ is also analytic.  Hence, the only possibility is that $\Phi(\zeta)$ is constant.

The proof of the converse is straightforward. Hence it is left to the reader.
\end{proof}



\medskip

\medskip

\medskip
\section{\bf{Invertibility and solvability of matrix-valued asymmetric truncated Toeplitz operators}} \label{s4}
\subsection{Invertibility}
When the symbol $\varphi$ is the constant $1$, in \cite{HNP}, using Devinatz--Windom's theorem, 
Hru$\check{s}\check{c}$ev,  Nikol'skii,  Pavlov obtain the following link with the invertibility of a classical Toeplitz operator associated to a unimodular symbol. 

\begin{thm}\cite[Theorem 4]{HNP}\label{1}
Let $\theta_1,\theta_2$ be inner functions. The following statements are equivalent.
\begin{enumerate}
\item The operator $A_{1}^{\theta_{1},\theta_{2}}=P_{\theta_{2}}:K_{\theta_{1}}\to K_{\theta_{2}}$ is invertible.
\item The Toeplitz operator $T_{\overline{\theta}_{2}\theta_{1}}:H^{2}\to H^{2}$ is invertible.
\item $\mbox{dist}(\overline{\theta_1}\theta_2,H^\infty)<1$ and $\mbox{dist}(\overline{\theta_2}\theta_1,H^\infty)<1$.
\end{enumerate}
\end{thm}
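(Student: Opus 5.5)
The plan is to prove (1)$\iff$(2) by an operator-theoretic factorization of $P_{\theta_2}|_{K_{\theta_1}}$ through a Toeplitz operator, and then to prove (2)$\iff$(3) using the Devinatz--Widom criterion for unimodular symbols.

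For (1)$\iff$(2), write $u=\overline{\theta}_2\theta_1$, which is unimodular. Decompose $H^2=K_{\theta_1}\oplus\theta_1H^2$ on the domain side and $H^2=K_{\theta_2}\oplus\theta_2H^2$ on the range side. The key computation is the action of $T_{\overline{\theta}_2}T_{\theta_1}$, which equals $T_{u}$ because $\theta_1$ is analytic.
\begin{itemize}
\item On $\theta_1H^2$: for $h\in H^2$, $T_u(\theta_1 h)=P_+(\overline{\theta}_2\theta_1^2h)$.
\item A cleaner route is the operator $V:H^2\to H^2$, $V=P_{\theta_2}T_{\theta_1}\ldots$, but I would rather use the standard identity
\[
T_{\overline{\theta}_2}|_{K_{\theta_2}}=0,\qquad T_{\overline{\theta}_2}(\theta_2 g)=g .
\]
\item Hence for $f\in H^2$, $T_{\overline{\theta}_2}f=\overline{\theta}_2(I-P_{\theta_2})f$. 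This shows $T_{\overline{\theta}_2}$ is, up to the unitary $M_{\overline{\theta}_2}:\theta_2H^2\to H^2$, the projection onto $\theta_2H^2$.
\end{itemize}
Consequently $T_u$ is invertible if and only if $(I-P_{\theta_2})M_{\theta_1}:H^2\to\theta_2H^2$ is invertible. Equivalently, $\theta_1H^2$ must map isomorphically onto $\theta_2H^2$ under the projection $I-P_{\theta_2}$.

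Next I would use the standard fact about two closed subspaces $M=\theta_1H^2$ and $N=\theta_2H^2$ of a Hilbert space. The compression $P_N|_M:M\to N$ is invertible iff $\|P_{N^\perp}P_M\|<1$ and $\|P_{M^\perp}P_N\|<1$. This in turn is equivalent to $P_{N^\perp}|_{M^\perp}:M^\perp\to N^\perp$ being invertible; both amount to the Friedrichs angle conditions $M\cap N^\perp=0=M^\perp\cap N$ with $M+N^\perp$ closed and total. Applying this with $M^\perp=K_{\theta_1}$ and $N^\perp=K_{\theta_2}$ gives (1)$\iff$(2). The main obstacle is making this two-subspace duality fully rigorous. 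I would prove it via the identity
\[
P_N P_M P_N + P_N P_{M^\perp} P_N = P_N
\]
together with its counterpart on $M^\perp$, which show that the lower bounds of the two compressions coincide.

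For (2)$\iff$(3), I would invoke the Devinatz--Widom theorem. For unimodular $u$, $T_u$ is invertible iff $\operatorname{dist}(u,H^\infty)<1$ and $\operatorname{dist}(\overline{u},H^\infty)<1$. Since $u=\overline{\theta}_2\theta_1$ and $\overline{u}=\overline{\theta}_1\theta_2$, these are precisely the two conditions in (3). One could alternatively phrase the criterion via $\|H_u\|<1$ and $\|H_{\overline u}\|<1$ together with Nehari's theorem, which is the form I would use if an elementary self-contained argument were desired. Note that the two distance conditions mirror the two norm conditions $\|P_{N^\perp}P_M\|<1$ and $\|P_{M^\perp}P_N\|<1$ above.
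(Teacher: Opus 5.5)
Your proof is correct. Note that the paper gives no proof of this statement. It only cites \cite[Theorem 4]{HNP} and remarks that Hru\v{s}\v{c}ev--Nikol'skii--Pavlov use the Devinatz--Widom theorem, so your argument supplies the missing proof.

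Your step (2)$\iff$(3) uses exactly that ingredient, and your Hankel-operator phrasing is already a complete proof of it. For unimodular $u$ one has $\|T_uf\|^2+\|H_uf\|^2=\|f\|^2$. Hence $T_u$ is bounded below iff $\|H_u\|<1$, and $T_u$ is onto iff $T_u^*=T_{\overline{u}}$ is bounded below iff $\|H_{\overline{u}}\|<1$. Nehari's theorem turns these into the two distance conditions.

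For (1)$\iff$(2) you give a self-contained geometric reduction. The factorization $T_{\overline{\theta}_2\theta_1}=M_{\overline{\theta}_2}(I-P_{\theta_2})M_{\theta_1}$ shows that $T_u$ is invertible iff $P_N|_M$ is, where $M=\theta_1H^2$ and $N=\theta_2H^2$. Moreover $A_1^{\theta_1,\theta_2}=P_{N^\perp}|_{M^\perp}$.

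The duality you flag as the main obstacle follows at once from your own criterion. Applied to the pair $(M^\perp,N^\perp)$, it says $P_{N^\perp}|_{M^\perp}$ is invertible iff $\|P_NP_{M^\perp}\|<1$ and $\|P_MP_{N^\perp}\|<1$. Taking adjoints, these norms equal $\|P_{M^\perp}P_N\|$ and $\|P_{N^\perp}P_M\|$.

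One wording point: your identity and its counterpart do not show that the two compressions have the same lower bound. They show that $P_{N^\perp}|_{M^\perp}$ and $(P_N|_M)^*=P_M|_N$ have the same lower bound, and symmetrically for $P_N|_M$ and $(P_{N^\perp}|_{M^\perp})^*$. The lower bounds of $P_N|_M$ and $P_{N^\perp}|_{M^\perp}$ themselves can differ when these operators are not invertible, so state it in the adjoint form.

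Compare this with the paper's own machinery. Theorem~\ref{G} with $\Phi=1$ would only tie $A_1^{\theta_1,\theta_2}$ to a block Toeplitz operator with symbol $\begin{pmatrix}\theta_2&1\\0&\overline{\theta}_1\end{pmatrix}$. Your reduction instead lands directly on the scalar operator $T_{\overline{\theta}_2\theta_1}$, which is what makes condition (3) reachable.

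Finally, delete the two unused false starts: the line computing $T_u(\theta_1h)$ and the aborted definition of $V$.
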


\medskip
Recently in \cite[Theorem 5.6]{part3}, C\^{a}mara and Partington provide 
 an interesting connection  between the invertibility of an asymmetric truncated Toeplitz operator and of  the Toeplitz operator with matrix symbol. Their theorem was proved for the domain $\mathbb{R}$, after establishing several lemmas in the scalar case. Below, we present a new 
 simplified (algebraic) proof of the matrix version for the unit circle $\mathbb{T}$ of their theorem. 
 We wish to emphasize that the result of C\^{a}mara and Partington is, however, more general for the scalar case.

\medskip

\begin{thm}\label{G} Let $\Theta_1$ and $\Theta_2$ be inner functions and 
 $\Phi\in L^{\infty}(\oL)$. Then $A_{\Phi}^{\Theta_{1},\Theta_{2}}$ is invertible if and only if the Toeplitz operator $T_{G}: H^2(E)\oplus H^2(E)\to H^2(E)\oplus H^2(E)$ is invertible,
where $G=\begin{pmatrix}
\Theta_{2}& \Phi\\
0&\Theta_{1}^{*}
\end{pmatrix}.
$
\end{thm}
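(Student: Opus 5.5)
The plan is to show that $T_G$ is equivalent, up to invertible (unitriangular) operator factors and a unitary change of coordinates, to $A_{\Phi}^{\Theta_1,\Theta_2}\oplus J$, where $J$ is an invertible operator. Invertibility of $T_G$ and of $A_{\Phi}^{\Theta_1,\Theta_2}$ are then equivalent.

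\textbf{Step 1: coordinates.} Since $\Phi\in L^\infty(\oL)$, the operator $A_\Phi^{\Theta_1,\Theta_2}$ is bounded, and so is $T_G$. We use the orthogonal decompositions
$$H^2(E)=K_{\Theta_1}\oplus\Theta_1H^2(E),\qquad H^2(E)=K_{\Theta_2}\oplus\Theta_2H^2(E).$$
For the domain of $T_G$, write $(f,g)\in H^2(E)\oplus H^2(E)$ with $g=g_1+\Theta_1h$, where $g_1\in K_{\Theta_1}$ and $h\in H^2(E)$. Because multiplication by an inner function is an isometry, $(f,g)\mapsto(g_1,h,f)$ is a unitary map onto $K_{\Theta_1}\oplus H^2(E)\oplus H^2(E)$. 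For the codomain, send $(u,v)\mapsto\bigl(P_{\Theta_2}u,\ \Theta_2^*P_{\Theta_2H^2(E)}u,\ v\bigr)$, which is a unitary map onto $K_{\Theta_2}\oplus H^2(E)\oplus H^2(E)$.

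\textbf{Step 2: compute $T_G$.} By definition,
$$T_G(f,g)=\bigl(\Theta_2f+P_+(\Phi g),\ P_+(\Theta_1^*g)\bigr).$$
The key identity is $P_+(\Theta_1^*g_1)=0$ for $g_1\in K_{\Theta_1}$. This holds because $\langle\Theta_1^*g_1,k\rangle=\langle g_1,\Theta_1k\rangle=0$ for every $k\in H^2(E)$. Consequently $P_+(\Theta_1^*g)=h$. For the first component, $\Theta_2f$ lies in $\Theta_2H^2(E)$, and $P_+(\Phi g)$ splits into its $K_{\Theta_2}$ and $\Theta_2H^2(E)$ parts. In the coordinates of Step 1, $T_G$ therefore becomes the $3\times3$ operator matrix
$$\begin{pmatrix}A_\Phi^{\Theta_1,\Theta_2}&B&0\\ C&D&I\\ 0&I&0\end{pmatrix},$$
with columns indexed by $(g_1,h,f)$. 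Here $B=P_{\Theta_2}\Phi\Theta_1|_{H^2(E)}$, and $C,D$ are the bounded operators $\Theta_2^*P_{\Theta_2H^2(E)}\Phi$ restricted to $K_{\Theta_1}$ and composed with $\Theta_1$ on $H^2(E)$, respectively.

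\textbf{Step 3: elimination.} Subtract $B$ times the third row from the first row; this is left multiplication by a unitriangular, hence invertible, operator matrix. Then use the identity entry in position $(2,3)$ to clear $C$ and $D$ by column operations, which is right multiplication by an invertible unitriangular matrix. The result is
$$\begin{pmatrix}A_\Phi^{\Theta_1,\Theta_2}&0&0\\0&0&I\\0&I&0\end{pmatrix},$$
which is $A_\Phi^{\Theta_1,\Theta_2}\oplus J$ with $J$ a flip, and $J$ is invertible. Since $T_G$ equals this matrix multiplied on both sides by invertible operators, $T_G$ is invertible if and only if $A_\Phi^{\Theta_1,\Theta_2}$ is.

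\textbf{Main obstacle.} The only step needing genuine care is Step 2: verifying that $P_+(\Theta_1^*g_1)=0$ in the operator-valued setting, and that the two coordinate maps are unitary. The latter uses that inner $\Theta_j$ are isometric on $H^2(E)$. The elimination in Step 3 is purely algebraic once the matrix form is established.
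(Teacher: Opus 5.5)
Your argument is correct, and I checked the entries of your $3\times 3$ operator matrix and the elimination in Step 3. The paper proves the statement at the level of equations. It solves $T_G(X_1,X_2)=(f_1,f_2)$ by back-substitution: the second row forces $X_2=\psi+\Theta_1 f_2$ with $\psi\in K_{\Theta_1}$ free, projecting the first row onto $K_{\Theta_2}$ leaves $A_{\Phi}^{\Theta_1,\Theta_2}\psi=P_{\Theta_2}(f_1-P_+(\Phi\Theta_1 f_2))$, and what remains lies in $\Theta_2H^2(E)$ and fixes $X_1$. For the converse it rewrites $A_{\Phi}^{\Theta_1,\Theta_2}f=g$ as $T_G(X_1,f)=(g,0)$ and appeals to unique solvability.

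Your row and column operations are the operator form of that same elimination. Packaging it as the identity $T_G=L^{-1}\bigl(A_{\Phi}^{\Theta_1,\Theta_2}\oplus J\bigr)R^{-1}$ (up to unitary changes of coordinates) still buys you two things. First, both directions follow at once, including the points the paper's converse leaves implicit: surjectivity of $A_{\Phi}^{\Theta_1,\Theta_2}$ comes from solvability of $T_G(X_1,f)=(g,0)$, and injectivity comes from the fact that $A_{\Phi}^{\Theta_1,\Theta_2}f=0$ means $P_+(\Phi f)=\Theta_2w$, so $(-w,f)\in\ker T_G$. Second, you get an equivalence after extension rather than just an equivalence of invertibility. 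In particular, $\ker T_G\cong\ker A_{\Phi}^{\Theta_1,\Theta_2}$, the same holds for the adjoints, $T_G$ has closed range exactly when $A_{\Phi}^{\Theta_1,\Theta_2}$ does, and Fredholmness and index coincide.

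In exchange, the paper's version displays the solution of $T_GX=F$ directly in terms of $(A_{\Phi}^{\Theta_1,\Theta_2})^{-1}$, although multiplying out your factorization gives the same formulas.
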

\begin{proof}
Consider the equation generated by the operator $A_{\Phi}^{\Theta_{1},\Theta_{2}}\in \mathcal{T}(\Theta_{1},\Theta_{2})$, i.e.,
\begin{equation}\label{asymm}
A_{\Phi}^{\Theta_{1},\Theta_{2}}f_{\Theta_{1}}=P_{\Theta_{2}}(\Phi f_{\Theta_{1}})=g_{\Theta_{2}},
\end{equation}
where $f_{\Theta_{1}}\in K_{\Theta_{1}}$ and $g_{\Theta_{2}}\in K_{\Theta_{2}}$.
\medskip

The equation generated by Toeplitz operator $T_{G}$ with block matrix symbol is given by
 \begin{equation}\label{TG}
P_{+}\begin{pmatrix}
\Theta_{2}& \Phi\\
0&\Theta_{1}^{*}
\end{pmatrix}
\begin{pmatrix}
X_{1}\\
X_{2}
\end{pmatrix}
=\begin{pmatrix}
f_{1}\\
f_{2}
\end{pmatrix},
\end{equation}
where $X_{1}, X_{2}, f_{1}, f_{2}\in H^{2}(E)$.

Suppose $A_{\Phi}^{\Theta_{1},\Theta_{2}}:K_{\Theta_{1}}\to K_{\Theta_{2}}$ is invertible. We wish to prove that \eqref{TG} has unique solution. Performing block multiplication, this equation can be written as the system of two equations
%
 \begin{equation}\label{first}
 P_{+}(\Theta_{2}X_{1})+P_{+}(\Phi X_{2})=f_{1}
 \end{equation}
 \begin{equation}\label{second}
 P_{+}(\Theta^{*}_{1}X_{2})=f_{2}.
 \end{equation}
 The general solution of \eqref{second} has the  form
 $$X_{2}=\psi+\Theta_{1}f_{2},$$
 where $\psi$ is an arbitrary function in 
  $K_{\Theta_{1}}$. Substituting this into 
  \eqref{first} yields 
 \begin{equation}\label{third}
 \Theta_{2}X_{1}+P_{+}(\Phi\psi)+P_{+}(\Phi\Theta_{1}f_{2})=f_{1}.
 \end{equation}
 Applying the projection operator $P_{\Theta_{2}}$ to both sides of \eqref{third}, we obtain
  \begin{equation}
 P_{\Theta_{2}}P_{+}(\Phi\psi)+P_{\Theta_{2}}P_{+}(\Phi\Theta_{1}f_{2})=P_{\Theta_{2}}(f_{1}).\nn
 \end{equation}
Equivalently 
 \begin{equation}\label{5th}
 A_{\Phi}^{\Theta_{1},\Theta_{2}} \psi=P_{\Theta_{2}}(f_{1}-P_{+}(\Phi\Theta_{1}f_{2})).
 \end{equation}
 By the invertibility of $A_{\Phi}^{\Theta_{1},\Theta_{2}}$,  
  equation \eqref{5th} has unique solution $\psi\in K_{\Theta_{1}}$, which in turn determines $X_2$. Since, $T_{\Theta_{2}}$ is isometric with range equal to $\Theta_2 H^2(E)$, by \eqref{third}, we have
  $$
 T_{\Theta_{2}}X_{1}=f_1-P_{+}(\Phi\psi)-P_{+}(\Phi\Theta_{1}f_{2})=h\in \Theta_2 H^2(E).
 $$
The equation
 \begin{equation}\label{X_1h}
     T_{\Theta_2}X_1=h
 \end{equation}
has at most one solution. 
Since $T_{\Theta_2}$  is injective and $h$ is in its range, there is unique $X_1\in H^2(E)$ satisfying \eqref{X_1h}. Explicitly, $$X_1=\Theta_2^{*} h\in H^2(E)$$
satisfying
 $$T_{\Theta_2}(X_1)=T_{\Theta_2}(\Theta_2^{*}h)=h.$$ 
 
Conversely, assume \eqref{TG} has unique solution for arbitrary $f_{1}, f_{2}\in H^{2}(E)$. We may express  \eqref{asymm} in the form
 \begin{equation}\label{atto}
 P_{+}(\Phi f_{\Theta_{1}})+\Theta_{2}X_{1}=g_{\Theta_{2}},
 \end{equation}
 where $X_{1}$ is some unknown function in 
  $H^{2}(E)$. Along with \eqref{atto}, we consider an equation of the kind
 \begin{equation}\label{zero}
 P_{+}(\Theta_{1}^{*}f_{\Theta_{1}})=0.
 \end{equation}
 Rewriting \eqref{atto} and \eqref{zero} of the form:
 \begin{equation}\label{last}
P_{+}\begin{pmatrix}
\Theta_{2}& \Phi\\
0&\Theta_{1}^{*}
\end{pmatrix}
\begin{pmatrix}
X_{1}\\
f_{\Theta_{1}}
\end{pmatrix}
=\begin{pmatrix}
g_{\Theta_{2}}\\
0
\end{pmatrix},
\end{equation}
such that \eqref{last} has unique solution, we deduce that the operator $A_{\Phi}^{\Theta_{1},\Theta_{2}}$ is invertible. Hence, the required result follows.
\end{proof}
\subsection{Solvability}
     An operator $T\in \mathcal{L}(E)$ is said to be normally solvable if the range of $T$ is closed in $E$.
\medskip


    \begin{rem}\label{Blockrange}
    If $S, T\in \oL$ have closed range, then the block-triangular operator 
$$B=\begin{pmatrix}
    S&A\\
    0&T
\end{pmatrix}$$ also has closed range, provided that $A$ is bounded.
Moreover, range of 
$B$ can be described as
$$\text{Ran}~ (B)=\text{Ran}~(S)\oplus \text{Ran}~(T).$$
\end{rem}

We now use Remark~\ref{Blockrange} to prove the following result.

\begin{prop} \label{prTG} Let $\Theta_1$ and $\Theta_2$ be inner functions and 
 $\Phi\in L^{\infty}(\oL)$. Then the operator $T_{G}$ is normally solvable.
\end{prop}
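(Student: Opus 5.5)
The plan is to write $T_G$ as a block upper-triangular operator on $H^2(E)\oplus H^2(E)$ and then apply Remark~\ref{Blockrange}. Since $P_+$ acts componentwise on $L^2(E)\oplus L^2(E)$, block multiplication as in \eqref{first}--\eqref{second} gives
$$T_G=\begin{pmatrix} T_{\Theta_2} & T_{\Phi}\\ 0 & T_{\Theta_1^*}\end{pmatrix}.$$
The off-diagonal entry $T_\Phi$ is bounded, with $\|T_\Phi\|\le\|\Phi\|_\infty$, because $\Phi\in L^\infty(\oL)$. So it remains to check that the two diagonal entries have closed range.

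First, $T_{\Theta_2}$ is multiplication by the inner function $\Theta_2$ on $H^2(E)$. Its boundary values are unitary a.e., so $T_{\Theta_2}$ is an isometry. Hence its range $\Theta_2H^2(E)$ is closed, exactly as was used in the proof of Theorem~\ref{G}.

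Second, $T_{\Theta_1^*}=T_{\Theta_1}^*$ is the adjoint of an isometry. It is therefore a co-isometry, since $T_{\Theta_1}^*T_{\Theta_1}=I$, and so it is surjective onto $H^2(E)$. Concretely, for any $f_2$ we have $T_{\Theta_1^*}(\Theta_1f_2)=f_2$, which is the computation behind the general solution $X_2=\psi+\Theta_1 f_2$ of \eqref{second}. Its range is thus all of $H^2(E)$, which is closed. Remark~\ref{Blockrange} then yields that $\mathrm{Ran}\,T_G$ is closed, i.e.\ $T_G$ is normally solvable.

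The step I expect to be the real obstacle is the justification of Remark~\ref{Blockrange} itself. For a block-triangular operator, closedness of the ranges of the diagonal blocks does not by itself control the range in general: the coupling term $A$ matters. So I would verify the conclusion directly, using the structure already exposed in the proof of Theorem~\ref{G}.

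Because $T_{\Theta_1^*}$ is onto, every second component $f_2$ is attained. Fixing $X_2=\psi+\Theta_1 f_2$ with $\psi\in K_{\Theta_1}$, the first component ranges over
$$\Theta_2H^2(E)+P_+(\Phi\Theta_1f_2)+P_+(\Phi K_{\Theta_1}).$$
Projecting with $P_{\Theta_2}$ as in \eqref{5th} shows that
$$(f_1,f_2)\in\mathrm{Ran}\,T_G\iff P_{\Theta_2}\bigl(f_1-P_+(\Phi\Theta_1f_2)\bigr)\in \mathrm{Ran}\,A_\Phi^{\Theta_1,\Theta_2}.$$
The right-hand condition involves only bounded maps of $(f_1,f_2)$. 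Hence closedness of $\mathrm{Ran}\,T_G$ reduces to closedness of $\mathrm{Ran}\,A_\Phi^{\Theta_1,\Theta_2}$ in $K_{\Theta_2}$. I would present the argument via Remark~\ref{Blockrange} as stated, and record this reduction as the precise mechanism behind it.
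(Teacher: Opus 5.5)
Your argument is the paper's argument: the block upper-triangular form of $T_G$, the isometry $T_{\Theta_2}$, the co-isometry $T_{\Theta_1^*}$, and then Remark~\ref{Blockrange}. It breaks at exactly the step you flagged, because Remark~\ref{Blockrange} is false. Take $S=T=0$ and $A$ any bounded operator with non-closed range. Then $\mathrm{Ran}\,B=\mathrm{Ran}\,A\oplus\{0\}$ is not closed, and $A=I$ already refutes the formula $\mathrm{Ran}\,B=\mathrm{Ran}\,S\oplus\mathrm{Ran}\,T$.

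Your fallback does not close the gap. The reduction you wrote is correct, and it is in fact an equivalence. With $L(f_1,f_2)=P_{\Theta_2}\bigl(f_1-P_+(\Phi\Theta_1f_2)\bigr)$, which is a bounded surjection onto $K_{\Theta_2}$, you get $\mathrm{Ran}\,T_G=L^{-1}(\mathrm{Ran}\,A_\Phi^{\Theta_1,\Theta_2})$. By the open mapping theorem, $\mathrm{Ran}\,T_G$ is closed if and only if $\mathrm{Ran}\,A_\Phi^{\Theta_1,\Theta_2}$ is closed. That second statement is Theorem~\ref{ATTO}, which the paper derives from this very proposition, so invoking it would be circular. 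It is also false, and hence so is the proposition.

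Here is a counterexample. Take $E=\mathbb{C}$; for general $E$ use $\Theta_j=\theta I_E$ and $\Phi=(1-z)I_E$. Let $\Theta_1=\Theta_2=\theta$ be the Blaschke product with zeros $\lambda_n=1-2^{-n}$, and let $\Phi=1-z$.

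\begin{itemize}
\item Then $A_\Phi^{\theta,\theta}=I-S_\theta$, whose adjoint is $I-S_\theta^*$.
\item $I-S_\theta^*$ is injective, because $1$ is not an eigenvalue of the backward shift: $(1-z)^{-1}\notin H^2$.
\item $I-S_\theta^*$ is not bounded below, since $k_{\lambda_n}\in K_\theta$ and $(I-S_\theta^*)k_{\lambda_n}=2^{-n}k_{\lambda_n}$.
\item An injective operator that is not bounded below has non-closed range. By the closed range theorem, $\mathrm{Ran}(I-S_\theta)$ is not closed either.
\item By the equivalence above, $T_G$ is not normally solvable.
\end{itemize}

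So the statement cannot be proved as given, and the paper's own proof has the same defect. What your reduction does establish is the equivalence: $T_G$ is normally solvable $\iff$ $A_\Phi^{\Theta_1,\Theta_2}$ is normally solvable. Closedness of either range has to be assumed, or obtained from extra hypotheses such as $\dim K_{\Theta_1}<\infty$ or $\dim K_{\Theta_2}<\infty$; it cannot be derived in general.
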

\begin{proof}
Observe that $T_{G}$ is the block-triangular Toeplitz operator
$$T_{G}=
\begin{pmatrix}
    T_{\Theta_2}& T_{\Phi}\\
    0& T_{\Theta^{*}_{1}}
\end{pmatrix}
$$
whose block diagonal entries are normally solvable. Indeed, since the symbol $\Theta_2$ is inner, the Toeplitz operator $T_{\Theta_2}$ is an isometry on $H^2(E)$, and hence has closed range. The operator $T_{\Theta^{*}_{1}}$ is a co-isometry, hence it has closed range also. 
By the boundedness of $T_\Phi$ and Remark \ref{Blockrange} it follows that $T_{G}$ has closed range and hence it is normally solvable.
\end{proof}
In the following theorem we discuss the normal solvability of the asymmetric truncated Toeplitz operators.


\begin{thm} \label{ATTO} Let $\Theta_1$ and $\Theta_2$ be inner functions and 
 $\Phi\in L^\infty(\oL)$.  Then the operator $A_{\Phi}^{\Theta_1, \Theta_2}$ is normally solvable.
\end{thm}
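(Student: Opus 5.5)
The plan is to transfer the closed range of $T_G$ from Proposition~\ref{prTG} to $A_{\Phi}^{\Theta_1,\Theta_2}$. I will reuse the block decomposition from the proof of Theorem~\ref{G}, with $G=\begin{pmatrix}\Theta_2&\Phi\\0&\Theta_1^*\end{pmatrix}$ as there. The idea is not to compare invertibility but to identify $\text{Ran}\,A_{\Phi}^{\Theta_1,\Theta_2}$, up to an orthogonal summand, with the intersection of $\text{Ran}\,T_G$ with a closed subspace. I write $A=A_{\Phi}^{\Theta_1,\Theta_2}$ for brevity.

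\textbf{Step 1: the slice where the second component vanishes.} Let $\mathcal{R}=\text{Ran}\,T_G\cap\big(H^2(E)\oplus\{0\}\big)$. This is an intersection of two closed subspaces, since $\text{Ran}\,T_G$ is closed by Proposition~\ref{prTG}. Hence $\mathcal{R}$ is closed.

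I will describe $\mathcal{R}$ explicitly. Since $T_G(X_1,X_2)=\big(P_+(\Theta_2X_1)+P_+(\Phi X_2),\,P_+(\Theta_1^*X_2)\big)$, the second component vanishes exactly when $X_2=\psi\in K_{\Theta_1}$. This is the same observation as the description of the general solution of \eqref{second} with $f_2=0$. Therefore
$$\mathcal{R}=\{\,(\Theta_2X_1+P_+(\Phi\psi),\,0):\ X_1\in H^2(E),\ \psi\in K_{\Theta_1}\,\}.$$

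\textbf{Step 2: split along $K_{\Theta_2}\oplus\Theta_2H^2(E)$.} Decompose $P_+(\Phi\psi)=A\psi+P_{\Theta_2H^2(E)}P_+(\Phi\psi)$. Because $X_1$ ranges over all of $H^2(E)$, the term $\Theta_2X_1$ absorbs the second piece. This should give
$$\mathcal{R}=\big(\text{Ran}\,A\oplus\Theta_2H^2(E)\big)\oplus\{0\}.$$
It then follows that
$$\text{Ran}\,A=\big(\text{Ran}\,A\oplus\Theta_2H^2(E)\big)\cap K_{\Theta_2},$$
which is closed as the intersection of two closed sets. Hence $A$ is normally solvable.

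\textbf{Expected obstacle.} The delicate point is Step 2. I must check carefully that every element of $\mathcal{R}$ has first component whose $K_{\Theta_2}$-part lies in $\text{Ran}\,A$, and conversely that every $A\psi+\Theta_2h$ arises this way. This needs the inclusion $\Theta_2H^2(E)\subset H^2(E)$, so that no extra projection appears. It also needs care about whether $A$ is only densely defined when $\Phi$ is merely in $L^\infty$; for bounded $\Phi$, however, $A$ is bounded, so this should not cause trouble.

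The argument depends entirely on Proposition~\ref{prTG}. Its block-triangular justification via Remark~\ref{Blockrange} is the step I would scrutinize first if anything fails. Should it need shoring up, I would first try to exploit that $T_{\Theta_2}$ is an isometry and $T_{\Theta_1^*}$ is a co-isometry. With these, the range of $T_G$ can be written directly as $\{(\Theta_2X_1+P_+\Phi X_2,\,P_+\Theta_1^*X_2)\}$, and I would then check closedness via the decomposition $H^2(E)=K_{\Theta_1}\oplus\Theta_1H^2(E)$ of $X_2$.
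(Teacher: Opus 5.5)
Your Steps 1 and 2 are a correct deduction of ``$\mathrm{Ran}\,T_G$ closed $\Rightarrow$ $\mathrm{Ran}\,A$ closed''. This is the paper's route too, and your version is cleaner. The paper asserts $T_G(0,k_n)\to(h,0)$, but only $P_{\Theta_2}(\Phi k_n)$ is known to converge. The missing piece $P_{\Theta_2H^2(E)}P_+(\Phi k_n)$ is exactly what you absorb into $\Theta_2X_1$.

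The genuine gap is the one you flagged, and it cannot be closed. Proposition~\ref{prTG} is false, and so is the theorem itself. Remark~\ref{Blockrange} already fails for $S=T=0$: the operator $\begin{pmatrix}0&A\\0&0\end{pmatrix}$ has range $\mathrm{Ran}\,A\oplus\{0\}$, which is not closed when $\mathrm{Ran}\,A$ is not.

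Here is a counterexample to the theorem. Take $E=\mathbb{C}$, $\Theta_1=\Theta_2=\theta$ the Blaschke product with zeros $\lambda_n=1-2^{-n}$, and $\Phi=1-z$. Then $A=A^{\theta,\theta}_{1-z}=I-S_\theta$.
\begin{itemize}
\item The adjoint $I-S_\theta^*$ is injective: if $S_\theta^*f=f$, then $f=f(0)/(1-z)$, which forces $f=0$.
\item Since $k_{\lambda_n}\in K_\theta$ and $S_\theta^*k_{\lambda_n}=\overline{\lambda_n}k_{\lambda_n}$, we get $\|(I-S_\theta^*)k_{\lambda_n}\|=|1-\lambda_n|\,\|k_{\lambda_n}\|$ with $|1-\lambda_n|\to 0$.
\item So $I-S_\theta^*$ is injective but not bounded below, hence its range is not closed.
\item By the closed range theorem, $\mathrm{Ran}\,A$ is not closed either.
\end{itemize}

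Your proposed fallback for shoring up Step 1 is circular. Write $X_2=\psi+\Theta_1u$ with $\psi\in K_{\Theta_1}$ and $u=P_+(\Theta_1^*X_2)$. The bounded invertible map $(a,u)\mapsto(a-P_+(\Phi\Theta_1u),u)$ carries $\mathrm{Ran}\,T_G$ onto $\big(\mathrm{Ran}\,A\oplus\Theta_2H^2(E)\big)\oplus H^2(E)$; this uses your Step 2 computation. Hence $\mathrm{Ran}\,T_G$ is closed exactly when $\mathrm{Ran}\,A$ is.

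So your argument actually proves the equivalence ``$A$ normally solvable $\iff$ $T_G$ normally solvable''. It also shows that the same data above refute Proposition~\ref{prTG}. Closed range of $A$ therefore requires an additional hypothesis, for instance $\dim K_{\Theta_1}<\infty$ or a (semi-)Fredholm condition on $A$ or $T_G$. A better proof of Proposition~\ref{prTG} will not supply it.
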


\begin{proof}
To prove that $A_{\Phi}^{\Theta_1, \Theta_2}$ is normally solvable, we need to show that if $(k_n)\subset K_{\Theta_1}$ is a sequence of functions such that
    $$A_{\Phi}^{\Theta_1, \Theta_2}k_n\to h,$$
 then $h\in \text{Rang}(A_{\Phi}^{\Theta_1, \Theta_2})$.

Let $(k_n)$ be such a sequence. Then $(0, k_n)$ is in $H^2(E)\oplus H^2(E)$ and $T_{G}$ acts on 
$(0, k_n)$ as follows:
    $$T_{G}(0, k_n)=(P_{+}(\Phi k_n), P_{+}(\Theta^{*}_1 k_n)).$$    
Since $k_{n}\in K_{\Theta_1}$, we have
    $$P_{+}(\Theta^{*}_1 k_{n})=0.$$
    Hence,
    $$T_{G}(0, k_n)\to (h, 0)~~\text{in} ~~H^2(E)\oplus H^2(E).$$
    Since by Proposition \ref{prTG},  $T_{G}$ has a close range, the limit $(h, 0)$ must belong to $\text{Rang}(T_{G})$. So there exist $(f, g)\in H^2(E)\oplus H^2(E)$ such that $$T_{G}(f, g)=(h, 0).$$ Hence, we have
    $$(P_{+}(\Theta_2 f+\Phi g), P_{+}(\Theta^{*}_1 g))=(h, 0).$$
    From the equality of the second components, $P_{+}(\Theta^{*}_1 g)=0,$ it follows that  $g\in K_{\Theta_1}$. From the first components, we have
    $$P_{+}(\Theta_2 f+\Phi g)=h.$$
    Taking the projection on $K_{\Theta_2}$ we have
    $$P_{\Theta_2}(\Phi g)=h,$$ which is exactly
    $$A_{\Phi}^{\Theta_1, \Theta_2}(g)=h.$$ Hence,
    $\text{Rang}(A_{\Phi}^{\Theta_1, \Theta_2})$ is closed.
\end{proof}

\end{document}